\documentclass[reqno,11pt]{amsart}

\usepackage{amsmath,amsfonts,amssymb,amsthm,epsfig}
\usepackage{mathtools}
\usepackage[english]{babel}
\usepackage{tikz}
\usetikzlibrary{arrows}
\usetikzlibrary {patterns}
\usepackage{comment}
\usepackage{enumitem}
\usepackage[colorlinks=true,urlcolor=blue,citecolor=blue,linkcolor=blue,linktocpage,pdfpagelabels,bookmarksnumbered,bookmarksopen]{hyperref}
 \allowdisplaybreaks
\numberwithin{equation}{section}

\font\script=rsfs10 at 11pt
\def\eps{\varepsilon}
\def\H{{\mbox{\script H}\,\,}}
\def\R{\mathbb R}
\def\C{\mathcal C}
\def\S{\mathbb S}
\def\N{\mathbb N}
\def\bal{\begin{aligned}}
\def\eal{\end{aligned}}
\def\proofof#1{\begin{proof}[Proof of #1]}
\def\Chi#1{\hbox{{\large $\chi$}{\Large $_{_{#1}}$}}}

\def\step#1#2{\par\noindent{\underline{\it Step~#1.}}\emph{ #2}\\}
\def\freccia#1{\xrightarrow[\ #1]{}}
\def\XXint#1#2#3{{\setbox0=\hbox{$#1{#2#3}{\int}$} \vcenter{\vspace{-1pt}\hbox{$#2#3$}}\kern-.5\wd0}}

\pgfarrowsdeclare{<<<}{>>>}
{
\arrowsize=0.2pt
\advance\arrowsize by .5\pgflinewidth
\pgfarrowsleftextend{-4\arrowsize-.5\pgflinewidth}
\pgfarrowsrightextend{.5\pgflinewidth}
}
{
\arrowsize=0.2pt
\advance\arrowsize by .5\pgflinewidth
\pgfpathmoveto{\pgfpointorigin}
\pgfpathlineto{\pgfpoint{-1.2mm}{-.8mm}}
\pgfusepathqstroke
\pgfpathmoveto{\pgfpointorigin}
\pgfpathlineto{\pgfpoint{-1.2mm}{.8mm}}
\pgfusepathqstroke
}

\newcounter{mt}
\def\maintheorem#1#2#3{\par \medskip \noindent {\bf Theorem~\mref{#1}}~(#2).~{\it #3}\par}
\def\mref#1{\Alph{#1}}
\def\maintheoremdeclaration#1{\stepcounter{mt}\newcounter{#1}\setcounter{#1}{\arabic{mt}}}

\maintheoremdeclaration{nobound}
\maintheoremdeclaration{approx1}
\maintheoremdeclaration{approx2}

\newtheorem{theorem}{Theorem}[section]
\newtheorem{lemma}[theorem]{Lemma}
\newtheorem{prop}[theorem]{Proposition}
\newtheorem{corollary}[theorem]{Corollary}
\newtheorem{remark}[theorem]{Remark}
\theoremstyle{definition}
\newtheorem{defin}[theorem]{Definition}
\newtheorem{example}[theorem]{Example}
\title[On the approximation of finite perimeter sets]{On the approximation of finite perimeter sets}

\author[A.~Carbotti]{A.~Carbotti}
\address[A.~Carbotti]{Dipartimento di Matematica e Fisica ``E.\ De Giorgi'', Universit\`a del Salento, Via per Arnesano, 73100 Lecce (LE), Italy}
\email{alessandro.carbotti@unisalento.it}

\author[S.~Cito]{S.~Cito}
\address[S.~Cito]{Dipartimento di Matematica e Fisica ``E.\ De Giorgi'', Universit\`a del Salento, Via per Arnesano, 73100 Lecce (LE), Italy}
\email{simone.cito@unisalento.it}

\author[D.~A.~La Manna]{D.~A.~La Manna}
\address[D.~A.~La Manna]{Dipartimento di Matematica e Applicazioni ``R.\ Caccioppoli'', Universit\`a  degli Studi di Napoli ``Federico II'', Via Cintia, Monte S.\ Angelo, 80126 Napoli (NA), Italy}
\email{domenicoangelo.lamanna@unina.it}

\author[A.~Pratelli]{A.~Pratelli}
\address[A.~Pratelli]{Dipartimento di Matematica, Universit\`a  di Pisa, Largo B. Pontecorvo 5, 56127 Pisa (PI), Italy}
\email{aldo.pratelli@unipi.it}

\author[G.~Stefani]{G.~Stefani}
\address[G.~Stefani]{Dipartimento di Matematica ``Tullio Levi-Civita'', Universit\`a di Padova, via Trieste 63, 35121 Padova (PD), Italy}
\email{giorgio.stefani@unipd.it}

\begin{document}

\begin{abstract}
We prove that if $\Omega\subseteq\mathbb{R}^N$ is a set with finite perimeter with $\H^{N-1}(\partial \Omega\setminus\partial^* \Omega)=0$, then any set of finite perimeter $E\subseteq\mathbb{R}^N$ can be approximated by a polyhedral or smooth bounded set $F$ in such a way that both the total perimeter of $E$ and the perimeter of $E$ inside $\Omega$ are approximated by those of $F$, and the boundary of $F$ has negligible intersection with the boundary of $\Omega$.  
In addition, we address the approximation for perimeter and volume with densities, and we present counterexamples illustrating the sharpness of our assumptions.  
Our constructions rely on a technical result that replaces $E$ with a set $F$ which agrees with $E$ and has the same boundary inside $\Omega$, while sharing no common boundary with $\Omega$, and does so without substantially altering the perimeter or the volume of the original set.
\end{abstract}

\maketitle

\section{Introduction}

We investigate the problem of approximating a given set $E$ by a polyhedral or smooth set $F$, in the sense of perimeter, inside a prescribed ``container'' $\Omega$. 
Beyond the standard approximation theory (see, for instance, \cite[Theorem 3.42 and Remark 3.42]{AFP}), our starting point is the following result; see \cite[Proposition~15]{ADM} and \cite[Theorem~A.4]{CS}, and the discussion therein.

\begin{theorem}[Approximation with a Lipschitz set $\Omega$]\label{startingpoint}
Let $\Omega\subseteq\R^N$ be a bounded open set with Lipschitz boundary, and let $E\subseteq\R^N$ be a set of finite perimeter in $\Omega$. Then, for every $\eps>0$ there exists a polyhedral set $F$ such that 
\begin{equation*}
|P(F;\Omega)-P(E;\Omega)|<\eps,
\qquad
|F\triangle E|<\eps,
\qquad
\H^{N-1}(\partial^* F\cap \partial^*\Omega)=0.
\end{equation*}
\end{theorem}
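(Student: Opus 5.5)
The strategy is to reduce to a set of finite perimeter in all of $\R^N$, apply the classical global approximation, and then remove the possible overlap of $\partial^*F$ with $\partial\Omega$ by a small translation.

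\textbf{Step 1: extension.} Since $\Omega$ is bounded and Lipschitz, the set $E\cap\Omega$ has finite perimeter in $\R^N$. Indeed, $\Omega$ is an extension domain for $BV$, and the trace of $\chi_E$ on $\partial\Omega$ is bounded by $1$, so
\[
P(E\cap\Omega)\le P(E;\Omega)+\H^{N-1}(\partial\Omega)<\infty .
\]
We could instead use a $BV$ extension of $\chi_{E\cap\Omega}$ across $\partial\Omega$, but we only need some finite-perimeter set $G\subseteq\R^N$ with $G\cap\Omega=E\cap\Omega$. Take $G=E\cap\Omega$, so that $P(G;\Omega)=P(E;\Omega)$.

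\textbf{Step 2: global approximation.} By standard approximation theory \cite[Theorem 3.42]{AFP}, there are polyhedral sets $F_k$ with $\chi_{F_k}\to\chi_G$ in $L^1$ and $P(F_k)\to P(G)$. Then $|D\chi_{F_k}|\rightharpoonup|D\chi_G|$ weakly-$*$ as measures, because we have strict convergence. Hence
\[
P(F_k;\Omega)\to P(G;\Omega)
\quad\text{provided}\quad |D\chi_G|(\partial\Omega)=0 .
\]
This proviso fails in general, since $G=E\cap\Omega$ charges $\partial\Omega$. I would fix this with lower semicontinuity on open sets. On $\Omega$ we have $\liminf P(F_k;\Omega)\ge P(G;\Omega)$. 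On $\R^N\setminus\overline\Omega$ we have $\liminf P(F_k;\R^N\setminus\overline\Omega)\ge 0$, which gives no information. So I instead choose $G$ so that $|D\chi_G|(\partial\Omega)=0$. Use the Lipschitz structure: locally, $\partial\Omega$ is a graph. Extend $E\cap\Omega$ outward by reflecting across the graph through a bi-Lipschitz map, and glue the local pieces with a partition of unity at the level of $BV$ functions. Then take a superlevel set by coarea to get back to a set. This produces a $G$ whose traces from both sides agree $\H^{N-1}$-a.e. on $\partial\Omega$, so $|D\chi_G|(\partial\Omega)=0$. With this choice, upper semicontinuity on the compact set $\overline\Omega$ together with lower semicontinuity on $\Omega$ gives $P(F_k;\Omega)\to P(E;\Omega)$. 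Also $|F_k\triangle E|\cap\Omega\to0$, interpreting the volume condition inside $\Omega$.

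\textbf{Step 3: transversality.} The condition $\H^{N-1}(\partial^*F\cap\partial^*\Omega)=0$ may fail, for instance when a face of $F_k$ lies on a flat piece of $\partial\Omega$. Replace $F_k$ by $F_k+v$ for a small vector $v$. The faces of $F_k$ are finitely many pieces of hyperplanes $H_i$. For each $i$, the set of $t\in\R$ with $\H^{N-1}\big((H_i+t\nu_i)\cap\partial\Omega\big)>0$ is at most countable. This holds because these sections are disjoint and $\H^{N-1}(\partial\Omega)<\infty$. So for a.e.\ small $v$ the translated set is transversal to $\partial\Omega$. Translation by small $v$ changes the volume by $o(1)$, by $L^1$-continuity of translations. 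It changes $P(\cdot;\Omega)$ by little because
\[
P(F_k+v;\Omega)=P(F_k;\Omega-v),
\]
and $P(F_k;\cdot)$ is a finite measure. Since $\Omega-v$ converges to $\Omega$ up to a thin neighbourhood of $\partial\Omega$, this is controlled as long as $P(F_k)$ does not charge $\partial\Omega$, which we have now arranged, or by continuity from above and below.

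\textbf{Main obstacle.} The delicate point is Step 2: building a finite-perimeter extension $G$ that does not charge $\partial\Omega$. This is exactly where the Lipschitz regularity (local reflection) is used, and the paper's later results suggest it is the step that fails for general $\Omega$.
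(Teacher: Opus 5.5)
Your proof is correct, but its key step takes a different route from the paper. The paper does not prove this statement directly. It quotes it and recovers it as the Lipschitz case of Theorem~\mref{approx1}, after replacing $E$ by $E\cap\Omega$, which has finite perimeter in $\R^N$ exactly by your Step~1.

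\begin{itemize}
\item \emph{How the paper gets $G$.} The set $G$ with $G\cap\Omega=E\cap\Omega$ and no common boundary with $\Omega$ comes from Theorem~\mref{nobound}. That theorem modifies $E$ only outside $\Omega$, by iteratively adding or removing thin cylinders around $\Gamma^\pm_E$. It needs no regularity of $\Omega$, and it keeps $P(G)$ close to $P(E)$ and $|G\triangle E|$ small.
\item \emph{How the paper concludes.} The hypothesis $\H^{N-1}(\partial\Omega\setminus\partial^*\Omega)=0$ upgrades this to $\H^{N-1}(\partial^*G\cap\partial\Omega)=0$. Lemma~\ref{almapp1} then plays the role of your semicontinuity argument on $\Omega$ and $\overline\Omega$.
\item \emph{What your construction buys and loses.} Your reflection-plus-coarea extension gives $P(G;\partial\Omega)=0$ in one stroke. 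This works only because bounded Lipschitz domains are $BV$ extension domains with no charge on the boundary. It gives no control on $P(G;\R^N\setminus\overline\Omega)$. That is harmless for this statement, but it is exactly what Theorem~\mref{nobound} adds and what the global estimate $|P(F)-P(E)|<\eps$ in Theorem~\mref{approx1} requires.
\item \emph{Your ``main obstacle''.} The step you single out is precisely the one the paper handles in general. Theorem~\mref{nobound} removes $\partial^*G\cap\partial^*\Omega$ for arbitrary $\Omega$, and the mild hypothesis on $\partial\Omega$ does the rest. Only the reflection method is tied to the Lipschitz structure.
\item \emph{Transversality.} Your translation argument uses the same countability idea as the paper's family $G_\sigma$ with pairwise negligible common boundaries.
\end{itemize}

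Two small fixes are needed.

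First, in Step~3 you say that $P(F_k)$ does not charge $\partial\Omega$. What you arranged is only $P(G;\partial\Omega)=0$; a face of $F_k$ may well lie on $\partial\Omega$, which is why Step~3 is needed at all. Your own bounds give $\limsup_{v\to 0}|P(F_k+v;\Omega)-P(F_k;\Omega)|\le P(F_k;\partial\Omega)$. Weak-$*$ upper semicontinuity on the compact set $\partial\Omega$ gives $\limsup_k P(F_k;\partial\Omega)\le P(G;\partial\Omega)=0$. So you must fix $k$ large first, and only then choose a generic small $v$.

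Second, suppose the volume condition is read in all of $\R^N$ with $E\subseteq\Omega$. Then your reflected $G$ has $|G\setminus\Omega|$ of fixed size. To fix this, multiply the extension by a cutoff equal to $1$ near $\overline\Omega$ and supported in a thin neighbourhood of it, before taking the superlevel set. This makes $|G\setminus\Omega|$ small and leaves $P(G;\partial\Omega)=0$ unchanged.
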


Approximations of this type play a fundamental role in a variety of settings. Indeed, many variational problems rely on the possibility of replacing sets of finite perimeter with smooth or polyhedral ones.  
Here we only mention the $\Gamma$-convergence results contained in~\cite{ADM,CCLP,CS,DRLM}, which were the main motivation for the present work, although the list could easily be extended.  

Our aim is to extend Theorem~\ref{startingpoint} in several directions. First, we wish to weaken the regularity assumptions on $\Omega$. Second, we aim to estimate the perimeter of $E$ also outside $\Omega$. More precisely, Theorem~\ref{startingpoint} only concerns the set $E$ inside $\Omega$; thus, replacing $E$ by $E\cap\Omega$ (or directly assuming that $E\subseteq\Omega$) makes no difference for its conclusion. However, it may be relevant to deal with a set $E$ of finite perimeter in the whole $\mathbb{R}^N$, and to approximate it by a polyhedral or smooth set $F$ having no boundary in common with $\partial\Omega$, in such a way that $P(F;\Omega)$ and $P(F)$ are $\varepsilon$-close to $P(E;\Omega)$ and $P(E)$, respectively.
Third, we would like to treat the more general situation in which the notions of volume and perimeter in $\mathbb{R}^N$ are defined through densities. More precisely, given two Borel functions $f:\mathbb{R}^N \to (0,+\infty)$ and $g:\mathbb{R}^N \times \mathbb{S}^{N-1} \to (0,+\infty)$, we define the \emph{$f$-volume} and the \emph{$g$-perimeter} of a set $E$ as
\begin{align}\label{genvolper}
|E|_f = \int_E f(x)\,dx\,, && P_g(E) = \int_{\partial^* E} g(x,\nu_E(x))\,d\H^{N-1}(x)\,,
\end{align}
which reduce to the standard ones when $f\equiv 1$ and $g\equiv 1$. We say that $E$ is \emph{a set of finite $g$-perimeter} if it is a set of locally finite Euclidean perimeter, and $P_g(E)<+\infty$. Our aim is to obtain a very general extension of Theorem~\ref{startingpoint} in all three of the above directions.

Our main results are obtained only after performing a first, crucial modification of the set $E$, which allows us to remove its common boundary with the container. More precisely, given a set $E$ of finite perimeter such that $\H^{N-1}(\partial^{*}E \cap \partial^{*}\Omega) > 0$, we aim to modify $E$ \emph{only outside} $\Omega$ so as to eliminate this common boundary without significantly altering either the perimeter or the volume.
This construction contains the main technical difficulties of the paper, and we believe it is of independent interest. Here, and in the rest of the paper, whenever $G$ and $H$ are two sets of finite perimeter, we let $P(G;H)=\H^{N-1}(\partial^* G \cap H)$.

\maintheorem{nobound}{Removing the common boundary}
{Let $\Omega$ and $E$ be two sets of locally finite perimeter in $\R^N$. Then, for every $\eps>0$ there exists a set $F\subseteq\R^N$ such that
\begin{gather}\label{eq:A.1}
F\cap \Omega= E\cap \Omega\,,\qquad\partial^* F \cap \Omega=\partial^* E \cap \Omega\,,
\\\label{eq:A.2}
 |F\triangle E| < \eps\,,\\
\label{eq:A.3}
|P(F) - P(E)|< \eps\,,\\
\label{eq:A.4}
\H^{N-1}(\partial^* F \cap \partial^* \Omega)=0\,.
\end{gather}}

\begin{figure}[h!]
\begin{tikzpicture}
\filldraw[fill=green!25, line width=.5pt] (3.5,-4) .. controls (2,-3.5) and (1,-5) .. (0.5,-5) .. controls (0,-5) and (-1,-3.5) .. (-1,-3) .. controls (-1,-2.5) and (0,-1.5) .. (0,-1) .. controls (0,-.5) and (-1,1.5) .. (-1,2) .. controls (-1,2.5) and (-.5,3) .. (0,3) .. controls (.5,3) and (1.8,2) .. (2,2) .. controls (3,2) and (3.6,1.5) .. (4,1.3) .. controls (4.4,1.1) and (4.8,1.5) .. (5,2) .. controls (5.2,2.5) and (6,3) .. (6.5,2.5) .. controls (6.7,2) and (5.2,0.5) .. (4.5,0.2) .. controls (3.8,-.1) and (2,0.2) .. (2,-.5) .. controls (2,-1.2) and (3,-3.5) .. (3.5,-3) .. controls (4,-2.5) and (4.2,-2.5) .. (5,-2) .. controls (5.4,-1.75) and (6,-2) .. (6.5,-2.5) .. controls (7,-3) and (5,-4.5) .. (3.5,-4);
\draw[line width=.9pt] (-2,0) .. controls (-2,1) and (1,2) .. (2,2) .. controls (3,2) and (3.6,1.5) .. (4,1.3) .. controls (4.4,1.1) and (6,-.2) .. (6,-1) .. controls (6,-1.8) and (5.8,-1.5) .. (5,.-2) .. controls (4.2,-2.5) and (4,-2.5) .. (3.5,-3) .. controls (3,-3.5) and (1,-3.6) .. (0,-3.2) .. controls (-1,-2.8) and (-2,-1) .. (-2,0);
\draw (-2,.5) node[anchor=north east] {$\Omega$};
\draw (7.1,3) node[anchor=north east] {$E$};
\filldraw (3,1.83) circle (1.2pt);
\draw (3,1.83) node[anchor=north east] {$x$};
\filldraw (4.26,-2.42) circle (1.2pt);
\draw (4.26,-2.42) node[anchor=south east] {$y$};
\draw (1.4,3.1) node[anchor=north east] {$F$};
\draw[pattern=north east lines, line width=.5pt] (3.5,-4) .. controls (2,-3.5) and (1,-5) .. (0.5,-5) .. controls (0,-5) and (-1,-3.5) .. (-1,-3) .. controls (-1,-2.5) and (0,-1.5) .. (0,-1) .. controls (0,-.5) and (-1,1.5) .. (-1,2) .. controls (-1,2.5) and (-.5,3) .. (0,3) .. controls (.5,3) and (1.8,2) .. (2,2)  .. controls (2,2.1) and (2,2.2) .. (2.1,2.3) .. controls (2.2,2.4) and (2.3,2.36) .. (2.4,2.35) .. controls (2.5,2.34) and (3,2.25) .. (3.22,2.15) .. controls (3.44,2.05) and (4.04,1.8) .. (4.14,1.65) .. controls (4.24,1.5) and (4.12,1.4).. (4,1.3) .. controls (4.4,1.1) and (4.8,1.5) .. (5,2) .. controls (5.2,2.5) and (6,3) .. (6.5,2.5) .. controls (6.7,2) and (5.2,0.5) .. (4.5,0.2) .. controls (3.8,-.1) and (2,0.2) .. (2,-.5) .. controls (2,-1.2) and (3,-3.5) .. (3.5,-3) .. controls (3.58,-3.08) and (3.66,-3.16) .. (3.8,-3.1) .. controls (3.94,-3.04) and (4.14,-2.9) .. (4.34,-2.76) .. controls (4.54,-2.62) and (4.84,-2.4) .. (5.00,-2.3) .. controls (5.16,-2.2) and (5.08,-2.1) .. (5,-2) .. controls (5.4,-1.75) and (6,-2) .. (6.5,-2.5) .. controls (7,-3) and (5,-4.5) .. (3.5,-4);
\end{tikzpicture}
\caption{The green set is $E$, the dashed set is $F$ given by Theorem~\mref{nobound}.}
\label{fig_nobound}
\end{figure}
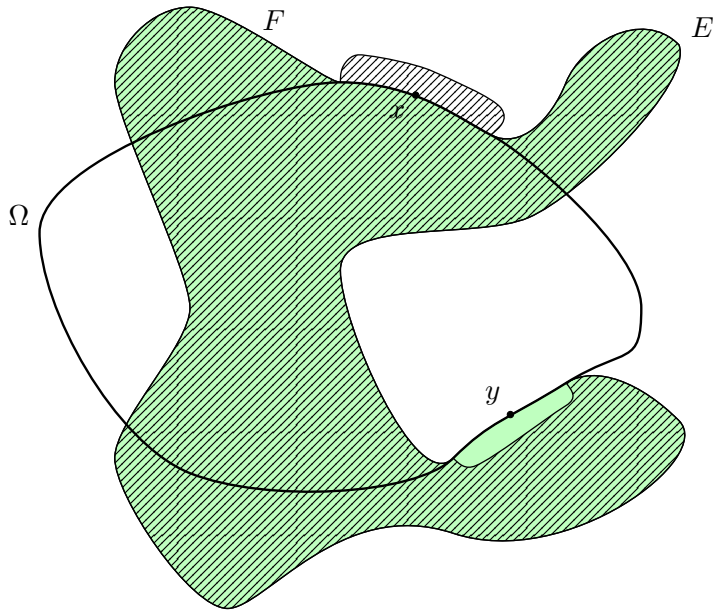

We emphasize that Theorem~\mref{nobound} is not used to replace the set $E$ with a more regular set $F$, but rather to eliminate the common boundary between $E$ and $\Omega$. In particular, the new set $F$ coincides with $E$ inside $\Omega$, and it differs from $E$ only outside $\Omega$, and only when $E$ and $\Omega$ share a boundary of positive measure.
Roughly speaking, as illustrated in Figure~\ref{fig_nobound}, one must ``enlarge'' the set $E$ near portions of the common boundary where $E\subseteq\Omega$ (for instance, near the point $x$ in the figure), and ``shrink'' it near portions where $E$ lies outside $\Omega$ (as near the point $y$). Clearly, if
$\H^{N-1}(\partial^{*}E \cap \partial^{*}\Omega)=0$,
then the conclusion obviously follows by choosing $F=E$.
The possibility of having a non-negligible common boundary between $E$ and $\Omega$ is precisely what makes Theorem~\ref{startingpoint} non-trivial. Likewise, the main difficulty in proving Theorem~\mref{nobound} is the need to ``push'' the common boundary $\partial^{*}E \cap \partial^{*}\Omega$ entirely outside $\Omega$.

We can now state the approximation theorems.
Our first main result is a generalization of Theorem~\ref{startingpoint} in the case $f\equiv1$ and $g\equiv1$. 

\maintheorem{approx1}{Approximation with a general set $\Omega$}{Let $\Omega\subseteq\R^N$ be  a set of finite perimeter such that $\H^{N-1}(\partial \Omega\setminus \partial^*\Omega)=0$. Then, for every set $E\subseteq \R^N$ of finite perimeter in $\R^N$ and for every $\eps>0$ there exists a polyhedral or smooth bounded open set $F$ such that
\begin{gather}
\label{eq:B1}|P(F;\Omega)-P(E;\Omega)|<\eps\,,\qquad|P(F)-P(E)|<\eps\,,\\
\label{eq:B2}|F\triangle E|<\eps\,,\\
\label{eq:B3}  \H^{N-1}(\partial^* F\cap \partial^*\Omega)=0\,.
\end{gather}
}
 
The sole assumption required on $\Omega$ in Theorem~\mref{approx1} is extremely mild, as we only ask that its reduced boundary coincides almost everywhere with its topological one. Moreover, in addition to approximating the perimeter inside $\Omega$, we also obtain the approximation of the total perimeter.

To state our second approximation result, we need some additional notation.

\begin{defin}\label{1homcon}
Let $g:\R^N\times \S^{N-1}\to (0,+\infty)$ be given, and let us call $\tilde g:\R^N\times \R^N\to(0,+\infty)$ the positively $1$-homogeneous extension of $g$; that is, for every $x\in\R^N$, $\nu\in\S^{N-1}$ and $\lambda\geq 0$ we define $\tilde g(x,\lambda \nu) = \lambda g(x,\nu)$. We say that $g$ is \emph{convex in the second variable} whenever so is $\tilde g$.
\end{defin}

The meaning of the above definition is very simple: $g$ is convex in the second variable precisely when, for every $x\in\mathbb{R}^N$, the \emph{unit ball} 
$\{v\in\mathbb{R}^N : g(x,v)=1\}$ is convex. This is a standard assumption, and it is required to guarantee the lower semicontinuity of the perimeter with respect to $L^1$-convergence of sets; namely, if $\chi_{E_j}\to\chi_E$ in $L^1$, then $P_g(E) \leq \liminf_{j} P_g(E_j)$.

With this notation in force, our second main result takes the following form.

\maintheorem{approx2}{Approximation with a general set $\Omega$ and double density}{Let $f:\R^N\to(0,+\infty)$ be $L^1_{\rm loc}(\R^N)$ and locally bounded, and $g:\R^N\times\S^{N-1}\to (0,+\infty)$ continuous in the first variable, and convex in the second one in the sense of Definition~\ref{1homcon}. Let $\Omega\subseteq\R^N$ be a set of finite perimeter such that $\H^{N-1}(\partial\Omega\setminus\partial^* \Omega)=0$. Then, for every set $E\subseteq \R^N$ of finite $g$-perimeter and for every $\eps>0$ there exists a smooth (possibly unbounded) open set $F$ such that
\begin{gather}
|P_g(F;\Omega)-P_g(E;\Omega)|<\eps\,,  \qquad |P_g(F)-P_g(E)|<\eps\,, \label{eq:weighted1}\\
|F\triangle E|_f<\eps\,, \label{eq:weighted2}\\
\H^{N-1}(\partial^* F\cap \partial^*\Omega)=0\,. \label{eq:weighted3}
\end{gather}
The set $F$ can be taken smooth and bounded, or polyhedral and bounded, if $E$ is bounded, or if $|E|_f<\infty$ and there exists a constant $M>0$ such that $g(x,\nu)\le Mf(x)$ for every $x\in\R^N$ and $ \nu\in\S^{N-1}$.
}

\vspace{1ex}

We note that Theorem~\mref{approx2} extends Theorem~\ref{startingpoint} in the third desired direction.
In addition, while the requirement $f\in L^1_{\rm loc}(\mathbb{R}^N)$ is clearly the weakest possible assumption on $f$, the sharpness of the continuity of $g$ is less evident, as well as of the hypotheses listed in the last part of the statement. However, in Section~\ref{sec:counterex} we will show that all these assumptions are indeed sharp.\par

We emphasize that the above results can be extended to the case when $E$ and $\Omega$ are sets of locally finite perimeter (or $g$-perimeter), via standard arguments.

\subsection*{Organization of the paper}
In Section~\ref{subs:not&prel} we fix some notation and recall a few standard results about sets of finite perimeter. Section~\ref{sec:ThA}, which forms the core of the paper, contains the proof of Theorem~\mref{nobound}. In Section~\ref{sec:pfThBeC}, we apply Theorem~\mref{nobound} to obtain a proof of the Euclidean case, Theorem~\mref{approx1}, and of the case with densities $f$ and $g$, thereby proving Theorem~\mref{approx2}. Concerning Theorem~\mref{approx2}, many of the arguments proceed essentially as in the Euclidean setting, and we will emphasize only those steps that require a different treatment. Finally, in Section~\ref{sec:counterex}, we present two counterexamples that illustrate the sharpness of the assumptions in Theorem~\mref{approx2}.

\subsection*{Acknowledgments}
A.~Carbotti,\, A.~Pratelli and G.~Stefani are members of the Istituto Nazionale di Alta Matematica (INdAM) and of the Gruppo Nazionale per l'Analisi Matematica, la Probabilit\`a  e le loro Applicazioni (GNAMPA).

A.~Carbotti has received funding from INdAM under the INdAM--GNAMPA 2026 Project ``Analisi variazionale per operatori locali e nonlocali possibilmente singolari o degeneri'' (grant agreement No. CUP\_E53\-C25\-002\-010\-001).

A.~Carbotti and S.~Cito have received funding from the MUR - PRIN 2022 project ``Elliptic and parabolic problems, heat kernel estimates, and spectral theory'' (N.~20223L2NWK).

A.~Carbotti and G.~Stefani have received funding from INdAM under the INdAM--GNAMPA Project 2025 ``Metodi variazionali per problemi dipendenti da operatori frazionari isotropi e anisotropi'' (grant agreement No.\ CUP\_E53\-240\-019\-500\-01).

S. Cito has received funding from the MUR under the MUR -- PRIN 2022 project ``Stochastic Modeling of Compound Events'' (N.~P2022KZJTZ) in the framework of European Union -- Next Generation EU, and from INdAM under the INdAM--GNAMPA Project 2025 ``Disuguaglianze funzionali di tipo geometrico e spettrale'' (grant agreement No.\ CUP\_E53\-240\-019\-500\-01).

D.~A.~La Manna has received funding from INdAM under the INdAM--GNAMPA Project 2025 ``Local and nonlocal equations with lower order terms'' (grant agreement No.\ CUP\_E53\-240\-019\-500\-01), and from the University of Naples Federico II through FRA Project ``Geometric Topics in Fluid Dynamics''.

A.~Pratelli has received funding from the MUR -- PRIN 2022 project ``Geometric Evolution Problems and Shape Optimization'' (N.~2022E9CF89).

G.~Stefani has received funding from INdAM under the INdAM--GNAMPA 2026 Project ``Metodi non locali classici e distribuzionali per problemi variazionali'' (grant agreement No. CUP\_E53\-C25\-002\-010\-001) and from the European Union -- NextGenerationEU and the University of Padua under the 2023 STARS@UNIPD Starting Grant Project ``New Directions in Fractional Calculus -- NewFrac'' (grant agreement No. CUP\_C95\-F21\-009\-990\-001).

\section{Preliminaries\label{subs:not&prel}}

We list here some standard notation and results about sets of finite perimeter, for all the details one can refer for instance to~\cite{AFP}. A set $\Omega\subseteq\R^N$ is said \emph{of finite perimeter} if its characteristic function $\Chi\Omega$ belongs to $BV(\R^N)$. The \emph{(reduced) boundary} $\partial^* \Omega$ of $\Omega$ is the set of all the points $x\in\R^N$ such that an outer normal vector $\nu=\nu_\Omega(x)$ exists; this means that
\begin{align*}
\lim_{r \searrow 0} \frac{|\Omega \cap B^+_{r,\nu}(x)|}{|B^+_{r,\nu}(x)|} = 0\,, && \lim_{r \searrow 0} \frac{|\Omega \cap B^-_{r,\nu}(x)|}{|B^-_{r,\nu}(x)|} = 1\,,
\end{align*}
where the positive and negative half-balls $B^\pm_{r,\nu}(x)$ are defined as
\[
B^\pm_{r,\nu}(x) = \Big\{ z\in\R^N: |z-x|<r,\, (z-x) \cdot \nu \gtrless 0\Big\}\,.
\]
It can be proved that a set $\Omega$ is of finite perimeter if and only if $\partial^* \Omega$ has finite $\H^{N-1}$-measure; if this is the case, the perimeter of $\Omega$ is defined as $P(\Omega) = \H^{N-1}(\partial^* \Omega)$. It can also be proved that, up to $\H^{N-1}$-negligible subsets, $\partial^* \Omega$ coincides with all the points of density $1/2$ for $\Omega$. Moreover, $\H^{N-1}$-a.e. point of $\R^N$ has density either $0$, or $1/2$, or $1$ with respect to $\Omega$.

It is obvious that, if $\Omega$ is regular (Lipschitz suffices), then the reduced boundary coincides with the topological boundary $\partial\Omega$ up to $\H^{N-1}$-negligible subsets. In general, it is true that $\partial^* \Omega\subseteq\partial \Omega$, while the other inclusion can be false; in fact, the topological boundary of a set of finite perimeter can be much larger than the reduced one, and it can also be the whole $\R^N$. It is important to notice that two sets which coincide $\H^N$-a.e. have the same reduced boundary, but the topological boundaries can be completely different. In Theorem~\mref{approx1} and~\mref{approx2} we require that the topological and reduced boundary of $\Omega$ coincide almost everywhere; notice that this is a very mild regularity assumption on $\Omega$. Indeed, this is true not only in the quite regular case of Lipschitz sets, but, for instance, also for the much less regular case of porous sets.

If $\Omega$ is a set of finite perimeter and $x\in\partial^*\Omega$, then the blow-up properties hold. This means that, calling $\Omega_r = (\Omega-x)/r$, one has that $\Chi{\Omega_r}$ locally converges in the $BV$ sense to $\Chi{\R^{N,-}_\nu}$, where $\R^{N,-}_\nu$ is the half-space $\{z\in\R^N,\, z\cdot \nu<0\}$, being again $\nu=\nu_\Omega(x)$ the outer normal vector.

A set is said \emph{of locally finite perimeter} if its intersection with any ball is a set of finite perimeter.

Let us fix some notation. Given a point $x$ and a vector $\nu\in\S^{N-1}$, for every $r>0$ we let $C(x,r)$ be the cylinder centered at $x$, with radius $r$, height $2r$ and axis parallel to $\nu$. Moreover, we let $C^\pm(x,r)$ be its upper and lower half, that is, the points $z\in C(x,r)$ such that $(z-x)\cdot \nu$ is respectively positive and negative. In addition, for any $0<\delta\leq 1$ we let $C_{\delta r}(x,r)$ be the cylinder defined in the same way as $C(x,r)$, but with height $2\delta r$ instead of $2r$.

Putting together~\cite[Theorem 2.63]{AFP} and~\cite[Theorem 2.83]{AFP}, we get the following useful result.

\begin{theorem}\label{hnleb}
Let $\Gamma\subseteq\R^N$ be a set with $\H^{N-1}(\Gamma)<+\infty$. Then, for $\H^{N-1}$-a.e. $x\in\Gamma$ there exists $\nu\in\S^{N-1}$ such that, for any $0<\delta\leq 1$,
\[
\lim_{r\to 0^+} \frac{\H^{N-1}\big(\Gamma\cap C_{\delta r}(x,r)\big)}{\omega_{N-1}r^{N-1}}=1\,.
\]
In particular, the $(N-1)$-dimensional density of $x$ with respect to $\Gamma$ is $1$, and the set $\Gamma$ admits a tangent hyperplane at $x$, namely, $\{z,\, (z-x)\cdot \nu=0\}$.
\end{theorem}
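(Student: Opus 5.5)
The plan is to combine the two results quoted just before the statement. \cite[Theorem 2.63]{AFP} controls the upper $(N-1)$-dimensional density of the measure $\mu=\H^{N-1}\llcorner\Gamma$, which is a finite Radon measure because $\H^{N-1}(\Gamma)<+\infty$. \cite[Theorem 2.83]{AFP} provides the blow-up to a hyperplane. The limit in the cylinders $C_{\delta r}(x,r)$ is then a lower bound combined with an upper bound, both obtained from the weak convergence of the rescaled measures $\mu_{x,r}=r^{1-N}(\Phi_{x,r})_\#\mu$, with $\Phi_{x,r}(z)=(z-x)/r$, to $\H^{N-1}\llcorner\nu^\perp$.

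\emph{Step 1 (upper density).} By \cite[Theorem 2.63]{AFP}, for $\H^{N-1}$-a.e. $x\in\Gamma$ the upper density $\limsup_r \mu(B_r(x))/(\omega_{N-1}r^{N-1})$ is at most $1$. This makes the family $\{\mu_{x,r}\}_{r<1}$ locally uniformly bounded, so it is precompact in the weak-$*$ topology of Radon measures.

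\emph{Step 2 (tangent plane).} At $\H^{N-1}$-a.e. point, \cite[Theorem 2.83]{AFP} yields an approximate tangent space $\nu^\perp$ with multiplicity one. This means $\mu_{x,r}\rightharpoonup \H^{N-1}\llcorner\nu^\perp$ as $r\to0$, and this $\nu$ is the vector in the statement. This is the step I expect to be the real obstacle. Theorem 2.83 applies to countably $\H^{N-1}$-rectifiable sets, so I would first split $\Gamma=R\cup U$ with $R$ rectifiable and $U$ purely unrectifiable. On $R$, the density of $U$ vanishes at a.e. point by \cite[Theorem 2.63]{AFP} applied to $U$, so the blow-up of $\Gamma$ equals that of $R$. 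For $U$, I would have to show $\H^{N-1}(U)=0$, and this is exactly where the stated generality needs care. The conclusion genuinely requires rectifiability: a purely unrectifiable set of positive finite measure has no a.e. approximate tangent plane, by the Besicovitch--Federer theory.

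For this reason I would first check, in every later use of Theorem~\ref{hnleb}, that $\Gamma$ is contained in a reduced boundary (for instance $\partial^*E\cap\partial^*\Omega$). Such a $\Gamma$ is rectifiable by De Giorgi's theorem, so $U=\emptyset$ and Step 2 applies. If the statement is needed for an arbitrary $\Gamma$ I would not be able to close this step, since the statement fails on purely unrectifiable parts.

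\emph{Step 3 (cylinders).} Given the weak convergence, I would compute
\[
\frac{\H^{N-1}(\Gamma\cap C_{\delta r}(x,r))}{\omega_{N-1}r^{N-1}}=\frac{\mu_{x,r}(C_\delta(0,1))}{\omega_{N-1}}.
\]
The limit measure gives zero mass to $\partial C_\delta(0,1)$: the plane $\nu^\perp$ meets that boundary only in an $(N-2)$-sphere, the lateral rim. Weak convergence therefore passes to the limit, which equals $\H^{N-1}(\nu^\perp\cap C_\delta(0,1))/\omega_{N-1}=1$, and this holds for every $\delta\in(0,1]$.

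\emph{Step 4 (consequences).} Density $1$ follows by sandwiching balls between cylinders, $C_{\delta r}(x,r)\subseteq B_{r\sqrt{1+\delta^2}}(x)$, together with the upper bound from Step 1. The tangent-hyperplane claim is the content of Step 2.
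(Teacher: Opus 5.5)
Your argument follows the paper's route. The paper proves this statement only by citing \cite[Theorem 2.63]{AFP} and \cite[Theorem 2.83]{AFP}, and your blow-up $\mu_{x,r}\rightharpoonup\H^{N-1}\llcorner\nu^\perp$, combined with the fact that this limit does not charge $\partial C_\delta(0,1)$, is what that citation amounts to.

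Your caveat is also correct. As literally stated, the theorem is false for a purely unrectifiable $\Gamma$ (for instance the four-corner Cantor set when $N=2$), since density $1$ a.e.\ forces rectifiability by Besicovitch and Marstrand--Mattila. In every application in the paper, however, $\Gamma$ lies in a countable union of reduced boundaries, so it is countably $\H^{N-1}$-rectifiable and your proof applies. Note that in Step~II of Proposition~\ref{genprop} you need the union $\cup_j\partial^* F_j$ there, not a single reduced boundary $\partial^* E$.
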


\section{The main technical result, Theorem~\mref{nobound}\label{sec:ThA}}

This section is devoted to proving Theorem~\mref{nobound}. Since the construction is quite involved, we subdivide it into several simpler lemmas. Let $E$ and $\Omega$ be two sets of locally finite perimeter. Keeping in mind that $\nu_E(x)=\pm \nu_\Omega(x)$ for $\H^{N-1}$-a.e. $x\in \partial^* E\cap \partial^* \Omega$, we call $\Gamma^\pm_E$ the two sets given by
\begin{align}\label{defgpm}
\Gamma^+_E = \big\{ x\in \partial^* E \cap \partial^* \Omega: \nu_E(x)=\nu_\Omega(x) \big\}\,, && \Gamma^-_E = \big\{ x\in \partial^* E \cap \partial^* \Omega : \nu_E(x)=-\nu_\Omega(x) \big\}\,.
\end{align}
The simple meaning of these sets is evident in Figure~\ref{FigGpme}. In order to show Theorem~\mref{nobound}, we have to ``push'' both $\Gamma^+_E$ and $\Gamma^-_E$ outside of $\Omega$. This means that the set $E$ has to be somehow ``enlarged'' around $\Gamma^+_E$, and ``retracted'' around $\Gamma^-_E$. Things are even more complicated because we do not want to change $E$ inside $\Omega$.
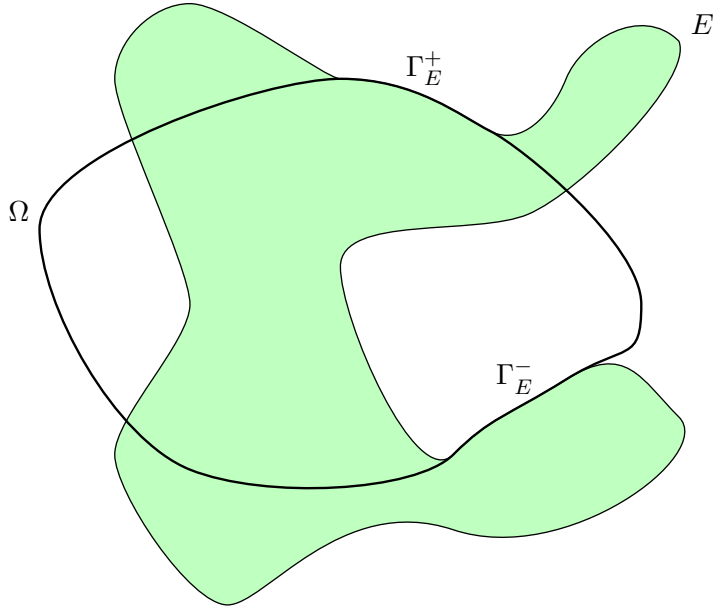
\begin{figure}[thbp]
\begin{tikzpicture}
\filldraw[fill=green!25, line width=.5pt] (3.5,-4) .. controls (2,-3.5) and (1,-5) .. (0.5,-5) .. controls (0,-5) and (-1,-3.5) .. (-1,-3) .. controls (-1,-2.5) and (0,-1.5) .. (0,-1) .. controls (0,-.5) and (-1,1.5) .. (-1,2) .. controls (-1,2.5) and (-.5,3) .. (0,3) .. controls (.5,3) and (1.8,2) .. (2,2) .. controls (3,2) and (3.6,1.5) .. (4,1.3) .. controls (4.4,1.1) and (4.8,1.5) .. (5,2) .. controls (5.2,2.5) and (6,3) .. (6.5,2.5) .. controls (6.7,2) and (5.2,0.5) .. (4.5,0.2) .. controls (3.8,-.1) and (2,0.2) .. (2,-.5) .. controls (2,-1.2) and (3,-3.5) .. (3.5,-3) .. controls (4,-2.5) and (4.2,-2.5) .. (5,-2) .. controls (5.8,-1.5) and (6,-2) .. (6.5,-2.5) .. controls (7,-3) and (5,-4.5) .. (3.5,-4);
\draw[line width=.9pt] (-2,0) .. controls (-2,1) and (1,2) .. (2,2) .. controls (3,2) and (3.6,1.5) .. (4,1.3) .. controls (4.4,1.1) and (6,-.2) .. (6,-1) .. controls (6,-1.8) and (5.8,-1.5) .. (5,.-2) .. controls (4.2,-2.5) and (4,-2.5) .. (3.5,-3) .. controls (3,-3.5) and (1,-3.6) .. (0,-3.2) .. controls (-1,-2.8) and (-2,-1) .. (-2,0);
\draw (-2,.5) node[anchor=north east] {$\Omega$};
\draw (7.1,3) node[anchor=north east] {$E$};
\draw (3.5,2.5) node[anchor=north east] {$\Gamma^+_E$};
\draw (4.7,-1.6) node[anchor=north east] {$\Gamma^-_E$};
\end{tikzpicture}
\caption{Definition of $\Gamma^\pm_E$.}
\label{FigGpme}
\end{figure}

Thanks to the blow-up properties of the perimeter, and keeping in mind Theorem~\ref{hnleb}, we easily get the following property.
\begin{lemma}\label{lemma0}
Let $x$ be a point of $(N-1)$-density $1$ for $\Gamma^+_E$, and call $\nu=\nu_E(x)$. Then, for every small $\delta>0$ there exists $\bar r>0$ such that for every $r<\bar r$ one has
\begin{gather}
|C^+(x,r)\cap (E\cup \Omega)| \leq \delta^2 \omega_{N-1}r^N\,, \label{uno} \\
|C^-(x,r)\setminus(E\cap \Omega)| \leq \delta^2 \omega_{N-1}r^N\,,\label{due}\\
\H^{N-1}\big(\Gamma^+_E\cap C_{\delta r}(x,r)\big) \geq (1-\delta)\omega_{N-1}r^{N-1}\,,\label{tre}\\
\H^{N-1}\big(\partial^* E \cap C(x,(1+\delta)r)\big) \leq (1+N\delta)\omega_{N-1}r^{N-1}\,,\label{treemezzo}\\
\H^{N-1}\big(\partial^*\Omega\cap C(x,(1+\delta)r)\big) \leq (1+N\delta) \omega_{N-1}r^{N-1}\,.\label{quattro}
\end{gather}
\end{lemma}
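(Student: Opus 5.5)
We would prove Lemma~\ref{lemma0} by applying the blow-up properties of $E$ and of $\Omega$ at $x$, together with Theorem~\ref{hnleb} for $\Gamma=\Gamma^+_E$. First, $x$ should be chosen in $\partial^*E\cap\partial^*\Omega$ with $\nu_E(x)=\nu_\Omega(x)=\nu$. Points of $\Gamma^+_E$ not of this kind, or at which the conclusion of Theorem~\ref{hnleb} fails, form an $\H^{N-1}$-negligible set. Such a set is harmless, since the lemma is only used for $\H^{N-1}$-a.e.\ point of $\Gamma^+_E$. Here we read ``density $1$'' as including the tangent-plane conclusion of Theorem~\ref{hnleb} with direction $\nu$. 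The tangent direction given by Theorem~\ref{hnleb} must be $\pm\nu$, because $\Gamma^+_E\subseteq\partial^*E$ and $\partial^* E$ has approximate tangent plane $\nu^\perp$ at $\H^{N-1}$-a.e.\ point.

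\emph{Volume estimates \eqref{uno} and \eqref{due}.} Set $E_r=(E-x)/r$ and $\Omega_r=(\Omega-x)/r$. By blow-up, both $\Chi{E_r}$ and $\Chi{\Omega_r}$ converge in $L^1_{\rm loc}$ to $\Chi{\R^{N,-}_\nu}$. Consequently $|C^+(0,1)\cap(E_r\cup\Omega_r)|\to0$ and $|C^-(0,1)\setminus(E_r\cap\Omega_r)|\to0$. Rescaling by $r^N$ and picking $\bar r$ so that both quantities are below $\delta^2\omega_{N-1}$ gives \eqref{uno} and \eqref{due}.

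\emph{Estimate \eqref{tre}.} Apply Theorem~\ref{hnleb} with $\Gamma=\Gamma^+_E$, which has locally finite $\H^{N-1}$ measure; if needed, intersect with a ball first. For fixed $\delta$, the ratio $\H^{N-1}(\Gamma^+_E\cap C_{\delta r}(x,r))/(\omega_{N-1}r^{N-1})$ tends to $1$. Hence it exceeds $1-\delta$ for all small $r$.

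\emph{Perimeter upper bounds \eqref{treemezzo} and \eqref{quattro}.} These come from the BV convergence $|D\Chi{E_r}|\rightharpoonup \H^{N-1}\llcorner\nu^\perp$, together with convergence of total variation on sets whose boundary is not charged by the limit. The boundary of the cylinder $C(0,1+\delta)$ meets $\nu^\perp$ only in an $(N-2)$-dimensional set, so it is not charged. We therefore get
\[
\frac{\H^{N-1}\big(\partial^*E\cap C(x,(1+\delta)r)\big)}{r^{N-1}} \;\longrightarrow\; \omega_{N-1}(1+\delta)^{N-1}.
\]
For small $\delta$ we have $(1+\delta)^{N-1}<1+N\delta$. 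Indeed, this holds with margin once $\delta$ is small enough depending on $N$, since $(1+\delta)^{N-1}=1+(N-1)\delta+O(\delta^2)$; this is why the statement says ``small $\delta$''. So the strict inequality survives for $r$ small. The same argument applied to $\Omega$ gives \eqref{quattro}. Finally, we take $\bar r$ to be the minimum of the finitely many thresholds.

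The only mildly delicate point is \eqref{treemezzo} and \eqref{quattro}. They need the fact that local weak-$*$ convergence of the perimeter measures upgrades to convergence on the fixed cylinder, i.e.\ that the limit measure $\H^{N-1}\llcorner\nu^\perp$ does not charge $\partial C(0,1+\delta)$. This is standard (De Giorgi's blow-up theorem) and poses no real obstacle.
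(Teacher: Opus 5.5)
Your proposal is correct and follows the same route as the paper. You use the blow-up of $E$ and $\Omega$ at $x$ for \eqref{uno}--\eqref{due}, Theorem~\ref{hnleb} for \eqref{tre}, and the blow-up/density of $\partial^*E$ and $\partial^*\Omega$ at $x$ together with $(1+\delta)^{N-1}<1+N\delta$ for \eqref{treemezzo}--\eqref{quattro}. You also make explicit two details the paper's short proof leaves implicit: that the tangent direction from Theorem~\ref{hnleb} coincides with $\pm\nu_E(x)$ at $\H^{N-1}$-a.e.\ point, and that $\partial C(0,1+\delta)$ is not charged by the limit measure.
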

\begin{proof}
Let $\sigma>0$ be fixed. Since $x\in \Gamma^+$, and thus the vector $\nu$ is the outer normal at $x$ both to $E$ and to $\Omega$, for every $r$ small enough we have $|C^+(x,r)\cap E|< \sigma r^N$ and $|C^-(x,r)\setminus E|< \sigma r^N$, and the same holds with $\Omega$ in place of $E$. The validity of~(\ref{uno}) and~(\ref{due}) for $r\ll 1$ is then clear. The validity of~(\ref{tre}), (\ref{treemezzo}) and~(\ref{quattro}) for small $r$ is true because $x$ is a point of $(N-1)$-dimensional density $1$ for $\Gamma^+_E$, so also for $\partial^* E$, and $x\in \partial^* \Omega$, and of the fact that $(1+\delta)^{N-1} < 1 + N \delta$ for any small $\delta$.
\end{proof}

The first brick in our construction is the following result, which precisely explains how to ``push'' $\Gamma^+_E$ outside of $\Omega$ near a point $x\in\Gamma^+_E$.

\begin{lemma}\label{lemma1}
Let $x$ be a point of $(N-1)$-dimensional density $1$ for $\Gamma^+_E$, let $\delta>0$ be small, and let $r>0$ be such that~(\ref{uno})--(\ref{quattro}) hold. Then, there exists $h\in (\delta r,2 \delta r)$ such that, calling for brevity $\C=C_h(x,r)$, and setting $\widetilde E=E \cup (\C\setminus \Omega)$, one has
\begin{gather}
\widetilde E \supseteq E\,, \qquad \widetilde E\cap \Omega = E \cap \Omega,\, \qquad \big| \widetilde E \setminus E\big| \leq 4 \delta \omega_{N-1} r^N\,,\label{sei}\\
\H^{N-1}\Big(\partial^* E \setminus \partial^* \widetilde E \Big)\geq \H^{N-1}\Big(\big(\partial^* E \setminus \partial^* \widetilde E\big)\cap \Gamma^+_E \Big)\geq (1-\delta) \omega_{N-1} r^{N-1}\,, \label{sette}\\
\H^{N-1}\Big(\partial^* E \setminus \partial^* \widetilde E \Big)\leq (1+N\delta)\omega_{N-1}r^{N-1}\,, \label{ottoemezzo}\\
\H^{N-1}\Big(\partial^* \widetilde E \setminus \partial^* E \Big)\leq (1+5N\delta) \omega_{N-1}  r^{N-1}\,,\label{otto}\\
\H^{N-1}\Big(\big(\partial^* \widetilde E \setminus \partial^* E\big)\cap \partial^*\Omega \Big)\leq (N+1)\delta  \omega_{N-1}  r^{N-1} \,,\label{nove}\\
\H^{N-1}\Big(\partial^* \widetilde E \setminus \partial^* E \Big)\geq (1-3\delta)\omega_{N-1}  r^{N-1}\,.\label{nove+}
\end{gather}
Moreover, for $\H^{N-1}$-a.e. $z\in \big(\partial^* \widetilde E \setminus \partial^* E\big)\cap \partial^*\Omega$ we have that $\nu_{\widetilde E}(z) = -\nu_\Omega(z)$.
\end{lemma}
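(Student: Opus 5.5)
The plan is to choose $h$ by a slicing (coarea/Fubini) argument, so that the lateral boundary and the top face of $\C=C_h(x,r)$ interact well with $E$ and $\Omega$. Then I will compute $\partial^*\widetilde E$ piece by piece. The set $\widetilde E=E\cup(\C\setminus\Omega)$ satisfies the three properties in~(\ref{sei}) trivially except for the volume bound. For that bound, $\widetilde E\setminus E\subseteq \C\setminus(E\cap\Omega)$, and $|\C|=2h\omega_{N-1}r^{N-1}\le 4\delta\omega_{N-1}r^N$.

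\textbf{Choice of $h$.} Consider the function $t\mapsto \H^{N-1}\big((E\cup\Omega)\cap C^+(x,r)\cap\{(z-x)\cdot\nu=t\}\big)$ for $t\in(\delta r,2\delta r)$. By Fubini and~(\ref{uno}) its integral is at most $\delta^2\omega_{N-1}r^N$. Hence, on a subset of $(\delta r,2\delta r)$ of measure larger than $\delta r/2$, the slice is at most $2\delta\omega_{N-1}r^{N-1}$. The same argument with~(\ref{due}) controls the lower slice at height $-t$ of $C^-\setminus(E\cap\Omega)$. We also require that the slices are the traces of $E$ and $\Omega$ (valid for a.e.\ $t$), and that $\H^{N-1}(\partial^*E\cup\partial^*\Omega)$ does not charge the top and bottom faces (valid for all but countably many $t$).

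\textbf{Boundary computation.} Using the standard formula for the reduced boundary of a union or difference (up to null sets), $\partial^*\widetilde E$ decomposes as follows.
\begin{enumerate}[label=(\roman*)]
\item Outside $\overline{\C}$ it coincides with $\partial^*E$.
\item Inside $\C$ the set $\widetilde E$ equals $E\cup(\C\setminus\Omega)$. Its boundary there is contained in $\partial^*\Omega\cup(\partial^*E\cap\Omega^{(1)})$, where $\Omega^{(1)}$ denotes the points of density $1$ for $\Omega$.
\item On the faces of $\C$ it is contained in the parts of the top and bottom discs where $\widetilde E$ has a jump.
\end{enumerate}

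Inside $\C$, every point of $\Gamma^+_E$ has density $1/2$ for $E$ and for $\Omega$, with $E$ and $\Omega$ on the same side. Since $\C\setminus\Omega$ fills the other side, such points have density $1$ for $\widetilde E$ and therefore leave the boundary. Combined with~(\ref{tre}), since $h>\delta r$, this gives~(\ref{sette}). Moreover $\partial^*E\setminus\partial^*\widetilde E\subseteq \partial^*E\cap\overline{\C}$, so~(\ref{treemezzo}) gives~(\ref{ottoemezzo}).

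For the new boundary, the first contribution is the top disc of radius $r$, whose area is at most $\omega_{N-1}r^{N-1}$. The second is the portion of the lateral surface lying outside $\Omega$, whose area is at most $\H^{N-1}(\partial C)\cdot 2h\lesssim N\delta\omega_{N-1}r^{N-1}$; this may require choosing $\delta$ small, or a slicing in the radial variable to make the constant $5N$ appear. The third is the portion of the bottom disc lying outside $E\cap\Omega$, which is at most $2\delta\omega_{N-1}r^{N-1}$ by the choice of $h$. The fourth is $\partial^*\Omega\cap\C\setminus\partial^*E$, which by~(\ref{quattro}) and~(\ref{tre}) has measure at most $(N+1)\delta\omega_{N-1}r^{N-1}$. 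This last piece is exactly the set appearing in~(\ref{nove}): points of $\partial^*\Omega\cap\partial^*\widetilde E$ that are new must lie in $\C\setminus\partial^*E$, or on faces that are not charged. Summing the four contributions yields~(\ref{otto}).

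For~(\ref{nove+}), the top disc minus the trace of $E\cup\Omega$ has measure at least $(1-2\delta)\omega_{N-1}r^{N-1}$. Inside it, $\widetilde E$ is full below and (essentially) empty above, so the disc belongs to $\partial^*\widetilde E\setminus\partial^*E$.

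\textbf{Orientation claim.} At a new boundary point $z\in\partial^*\Omega\cap\C$ we have $z\notin\partial^*E$. For a.e.\ such $z$, the point has density $0$ or $1$ for $E$. If its density for $E$ were $1$, then $\widetilde E\supseteq E$ would have density $1$ at $z$, contradicting $z\in\partial^*\widetilde E$. So $z$ has density $0$ for $E$, and near $z$ the set $\widetilde E$ coincides with $\C\setminus\Omega$. Hence its outer normal is $-\nu_\Omega(z)$.

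I expect the main obstacle to be the careful a.e.\ bookkeeping on the faces and lateral surface of $\C$, so as to get the exact constants. In particular, the lateral surface needs the bound $\lesssim h\cdot r^{N-2}$, which is fine since it is $O(\delta r^{N-1})$.
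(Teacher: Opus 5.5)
Your proposal is correct and is essentially the paper's proof. You choose $h\in(\delta r,2\delta r)$ by Fubini and Chebyshev so as to control the slices at heights $\pm h$; your two separate $2\delta$ bounds, together with the genericity conditions on traces and on uncharged faces, are if anything more careful than the paper's single bound (\ref{cinque}), and you then follow the same density arguments and the same splitting of the new boundary into upper face, lower face, lateral surface and $\partial^*\Omega\setminus\partial^*E$.

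There are two small fixes to make:
\begin{itemize}
\item The lateral surface has area exactly $2h(N-1)\omega_{N-1}r^{N-2}\le 4(N-1)\delta\omega_{N-1}r^{N-1}$. So no extra smallness or radial slicing is needed, and the four contributions sum to $(1+(5N-1)\delta)\omega_{N-1}r^{N-1}$.
\item $\partial^*\Omega$ may charge the lateral surface of $\C$, and the choice of $h$ cannot prevent this. For (\ref{nove}) you should therefore estimate $(\partial^*\Omega\setminus\partial^*E)\cap\overline{\C}$ rather than its intersection with $\C$; the same bound still holds because $C_{\delta r}(x,r)\subseteq\overline{\C}\subseteq C(x,(1+\delta)r)$. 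Likewise, prove the orientation claim as the paper does: $z$ has density $0$ for $\widetilde E\cap\Omega=E\cap\Omega$, which forces $\nu_{\widetilde E}(z)=-\nu_\Omega(z)$ at lateral points too, not only for $z\in\C$.
\end{itemize}
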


\begin{figure}[h!]
\begin{tikzpicture}[>=>>>]
\filldraw[green!25, line width=.5pt, draw=black] (2,1) .. controls (5,3) and (5,2.8) .. (6,3) .. controls (7,3.1) and (7,3.1) ..(8,3).. controls (8.2,3) and (8.6,2.8) .. (8.6,2.8)  .. controls (9,2.5) and (10,1.5) .. (11,1);
\draw[line width=.5pt] (6,2.2) .. controls (6,3) and (6,3.5) .. (6,4) .. controls (6.5,4) and (7.5,4) .. (8,4) .. controls (8,3.4) and (8,3.1) .. (8,2.2).. controls (7.5,2.2) and (6.8,2.2) .. (6,2.2);
\draw[line width=1.2pt] (2,1) .. controls (5,3) and (5,2.8) .. (6,3) .. controls (7,3.1) and (7,3.1) .. (8,3).. controls (8.2,3) and (8.6,2.8) .. (8.6,2.8)  ..  controls (8.8,2.7) and (8.9,2.6) .. (9,2.7) .. controls (9.1,2.8) and (9.1,2.9) .. (9.1,3) .. controls (9.1,3.1) and (8.9,3.2) .. (8.6,3.3)..  controls (8,3.5) and (7.6,3.8) .. (7.8,4)  ..  controls (9,5) and (10.5,4.5) .. (10,1);
\draw (9.5,3.85) node[anchor=north east] {$\Omega$};
\draw (7.75,1.75) node {$E$};
\draw (7,2.75) node{$x$};
\draw[->,line width=1pt] (7,3.1) -- (7,4.3);
\filldraw[black] (7,3.1) circle (1pt);
\draw (3,2.5) node[anchor=north east] {$\Gamma^+_E$};
\draw (6,4.3) node[anchor=north east] {$\mathcal{C}$};
\draw (7.5,4.9) node[anchor=north east] {$\nu_E(x)$};
\fill[pattern=north east lines] 
(2,1) .. controls (5,3) and (5,2.8) .. (6,3)
.. controls (6,3) and (6,3.5) .. (6,4) .. controls (6.5,4) and (6.5,4) .. (7,4).. controls (7,4) and (7,4) .. (7,3) .. controls (7,3) and (7,3) .. (6.75,3) .. controls (6.75,3) and (6.75,3) .. (6.75,2.5).. controls (6.75,2.5) and (6.75,2.5) .. (7.25,2.5).. controls (7.25,2.5) and (7.25,2.5) .. (7.25,3).. controls (7.25,3) and (7.25,3) .. (7,3).. controls (7,3) and (7,3) .. (7,4)..controls (7,4) and (7,4) ..(7.8,4) ..  controls  (7.6,3.8) and (8,3.5) .. (8,3.5)
 .. controls (8,3.4) and (8,3.1)
..(8,3).. controls (8.2,3) and (8.6,2.8) .. (8.6,2.8)  .. controls (9,2.5) and (10,1.5) .. (11,1) .. controls (10,1) and (9,1) .. (8,1).. controls (8,2) and (8,2) .. (8,2).. controls (8,2) and (8,2) .. (7.5,2).. controls (7.5,2) and (7.5,2) .. (7.5,1.5).. controls (7.5,1.5) and (7.5,1.5) .. (8,1.5).. controls (8,1.8) and (8,1.5) .. (8,1);
\end{tikzpicture}
\caption{The green set is $E$, the dashed set is $\widetilde E$, the thicker black line is $\partial^*\Omega$.}
\label{e_tilde}
\end{figure}
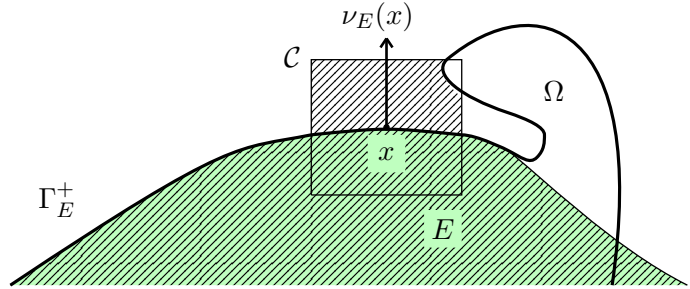

\begin{proof}
Let $x,\,\delta$ and $r$ be as in the statement. For every $t\in(-r,r)$, we let $D_t$ be the $(N-1)$-dimensional disk of radius $r$ centered at $x+t\nu_E(x)$ and orthogonal to $\nu_E(x)$. Moreover, let
\[
A_t := \left\{\begin{array}{cc}
D_t \setminus (E\cap \Omega) &\hbox{if $t<0$}\,, \\
D_t \cap (E\cup \Omega) &\hbox{if $t>0$}\,. 
\end{array}\right.
\]
We claim the existence of some $h\in (\delta r, 2 \delta r)$ such that
\begin{equation}\label{cinque}
\H^{N-1}(A_h \cup A_{-h}) \leq 3\delta \omega_{N-1} r^{N-1}\,.
\end{equation}
In fact, if the opposite inequality holds for almost all $h\in (\delta r, 2\delta r)$, then by Fubini Theorem and thanks to~(\ref{uno}) and~(\ref{due}) we would have
\[\begin{split}
2\delta^2\omega_{N-1}r^N
&\geq |C^+(x,r)\cap (E\cup \Omega)|+|C^-(x,r)\setminus(E\cap \Omega)| 
=\int_{-r}^r\H^{N-1}(A_t)\, dt \\
&\geq \int_{\delta r}^{2\delta r}\H^{N-1}(A_t\cup A_{-t})\, dt 
> 3\delta^2 \omega_{N-1} r^N\,,
\end{split}\]
which is a contradiction. Let then $h\in (\delta r, 2\delta r)$ be such that~(\ref{cinque}) holds, and call $\C=C_h(x,r)$ and $\widetilde E=E \cup (\C\setminus \Omega)$ as in the claim. We immediately notice that the validity of~(\ref{sei}) is obvious by construction.

Let us now consider a point $z\in \Gamma^+_E\cap C_{\delta r}(x,r)$. Up to a $\H^{N-1}$-negligible subset, any such point is contained in $\partial^* E$ since $\Gamma^+_E\subseteq \partial^* E$; moreover, it has density $1/2$ with respect to both $E$ and $\Omega$, and since $\nu_E(z)=\nu_\Omega(z)$ by definition of $\Gamma^+_E$, then it has density $1$ with respect to $\widetilde E$, using the fact that $h>\delta r$. Hence, $z\notin \partial^* \widetilde E$. Therefore, also by~(\ref{tre}) we get the validity of~(\ref{sette}).

The validity of~(\ref{ottoemezzo}) is clear by~(\ref{treemezzo}) and since $\H^{N-1}(\partial^* E\setminus \partial^* \widetilde E)\leq \H^{N-1}(\partial^* E \cap \overline{C(x,r)})$.

Let us then pass to consider a point $z\in \partial^* \widetilde E\setminus \partial^* E$, which must of course be contained in $\overline \C$. Again up to a $\H^{N-1}$-negligible subset, the point $z$ must have density $1/2$ with respect to $\widetilde E$ and density $0$ with respect to $E$ (keep in mind that $\widetilde E\supseteq E$). If $z\in\C$, we deduce that it has necessarily density $1/2$ with respect to $\Omega$; otherwise, it must belong to $\partial\C$. Using~(\ref{tre}) and~(\ref{quattro}) we immediately get that
\begin{equation}\label{ptl}
\H^{N-1}\Big(\big(\partial^*\Omega \setminus \partial^* E\big)\cap \overline \C\Big)\leq (N+1) \delta \omega_{N-1}r^{N-1}\,,
\end{equation}
which in particular implies~(\ref{nove}). To get~(\ref{otto}), we have to consider the points of $\big(\partial^*\widetilde E \setminus \partial^* E\big)\setminus \partial^* \Omega$. Since, as already observed, these points must be contained in $\partial\C$, we can write
\[
\big(\partial^*\widetilde E \setminus \partial^* E\big)\setminus \partial^* \Omega = Q_{\rm lat} \cup Q_{\rm down} \cup Q_{\rm up}\,,
\]
where $Q_{\rm lat}$ are the points in the lateral boundary of $\C$, $Q_{\rm down}$ those in the lower basis of $\C$, and $Q_{\rm up}$ those in the upper basis of $\C$. Concerning $Q_{\rm lat}$, since it is contained in the lateral boundary of $\C$, we simply estimate
\[
\H^{N-1}(Q_{\rm lat}) \leq (N-1)\omega_{N-1} r^{N-2} \cdot (2 h) \leq  4\delta(N-1)\omega_{N-1} r^{N-1}\,.
\]
Similarly, since $Q_{\rm up}$ is contained in the upper boundary of $\C$, we clearly have
\[
\H^{N-1}(Q_{\rm up}) \leq \omega_{N-1} r^{N-1}\,.
\]
Concerning $Q_{\rm down}$, by construction it can only contain points of the lower basis of $\C$ which have density $0$ with respect to $E$, so in particular points of $A_{-h}$. Hence by~(\ref{cinque}) we can estimate
\[
\H^{N-1}(Q_{\rm down}) \leq 3\delta \omega_{N-1} r^{N-1}\,.
\]
Putting the last three estimates together, we get that
\[
\H^{N-1}\Big(\big(\partial^*\widetilde E \setminus \partial^* E\big)\setminus \partial^* \Omega\Big) \leq \big( 1 + \delta (4N-1)\big)\omega_{N-1} r^{N-1}\,,
\]
which by~(\ref{ptl}) gives~(\ref{otto}). In a similar way, by construction $Q_{\rm up}$ contains all the points of the upper basis of $\C$ which are not in $A_h$, so
\[
\H^{N-1}\big(\partial^*\widetilde E \setminus \partial^* E \big) 
\geq \H^{N-1}(Q_{\rm up})\geq \omega_{N-1} r^{N-1} - \H^{N-1}(A_h)\,,
\]
which by~(\ref{cinque}) gives~(\ref{nove+}).

To conclude, let us consider a point $z\in (\partial^* \widetilde E\setminus \partial^* E)\cap \partial^* \Omega$. Up to a $\H^{N-1}$-negligible set, this point must have density $1/2$ with respect to $\widetilde E$ and to $\Omega$, and $0$ with respect to $E$. Since $\widetilde E\cap \Omega = E\cap \Omega$, the density of $z$ in $\widetilde E\cap\Omega$ is the same as in $E\cap \Omega$, that is, $0$. As a consequence, the outer normal vector $\nu_{\widetilde E}(z)$ to $\widetilde E$ at $z$ is $-\nu_\Omega(z)$. This concludes the proof.
\end{proof}

The above lemma allows to properly enlarge the set $E$ near a point $x\in\Gamma^+_E$; in order to deal with all points in $\Gamma^+_E$, we have to choose a sequence of points $x_j$ and perform a recursive construction. Let us be precise. We keep for the moment $\delta$ fixed; then, for every $x\in \Gamma^+_E$, we set
\begin{equation}\label{defr0}
r^0(x)= \max \Big\{ r \in [0, 1]: \hbox{properties~(\ref{uno})--(\ref{quattro}) hold} \Big\}\,.
\end{equation}
It is immediate from the definition that the above maximum actually exists. Moreover, Lemma~\ref{lemma0} implies that properties~(\ref{uno})--(\ref{quattro}) hold for all $r$ smaller than some suitable $\bar r$, so in particular $r^0(x)>0$; however, notice carefully that this does not imply that they hold for all $r$ smaller than $r^0(x)$. Unless $\Gamma^+_E=\emptyset$, we can then fix a point $x_1\in \Gamma^+_E$ which almost maximizes $r^0$, in the sense that
\[
r_1 := r^0(x_1) \geq \frac 12\, \sup \Big\{ r^0(x): x\in\Gamma^+_E\Big\}\,.
\]
We hence denote for brevity the cylinder provided by Lemma~\ref{lemma1} with the choice of $x=x_1$ and $r=r_1$ by  $\C_1$, and the set denoted as $\widetilde E$ in Lemma~\ref{lemma1} as $E_1=E\cup (\C_1\setminus \Omega)$. Now, as in~(\ref{defgpm}) we call $\Gamma^+_{E_1}$ the set of points in $\partial^* E_1\cap \partial^* \Omega$ for which $\nu_{E_1}=\nu_\Omega$. We observe that
\begin{equation}\label{noth}
\Gamma^+_{E_1} = \Gamma^+_E \cap \partial^* E_1\,.
\end{equation}
Indeed, one inclusion is true because if $x\in \Gamma^+_E \cap \partial^* E_1$ then $\nu_{E_1}(x)=\nu_E(x)$, since $E_1\supseteq E$. The other inclusion is given directly by Lemma~\ref{lemma1}, since a point in $(\partial^* E_1 \setminus \partial^* E)\cap \partial^*\Omega$ is not contained in $\Gamma^+_{E_1}$. Now, similarly to~(\ref{defr0}), for every $x\in \Gamma^+_{E_1}$ we set
\begin{equation}\label{defr1}
r^1(x)= \max \Big\{ r\in [0,1]: \hbox{(\ref{uno})--(\ref{quattro}) with $E_1$ and $\Gamma^+_{E_1}$ in place of $E$ and $\Gamma^+_E$ hold} \Big\}\,.
\end{equation}
Then, as before, unless $\Gamma^+_{E_1}$ is empty we can fix $x_2\in \Gamma^+_{E_1}$ almost maximizing $r^1$, i.e., so that
\[
r_2 := r^1(x_2) \geq \frac 12\, \sup \Big\{ r^1(x): x\in\Gamma^+_{E_1}\Big\}\,,
\]
and we call $\C_2$ the cylinder given by Lemma~\ref{lemma1} with the set $E_1$ in place of $E$, and with $x=x_2$ and $r=r_2$; moreover, we set $E_2=E_1\cup (\C_2\setminus \Omega)$. Going on with the obvious recursion, we get a sequence of points $\{x_j\}$, an increasing sequence of sets $\{E_j\}$, and a decreasing sequence of sets $\{\Gamma^+_{E_j}\}$, which as in~(\ref{noth}) satisfy
\begin{equation}\label{noth2}
\Gamma^+_{E_j} = \Gamma^+_{E_{j-1}} \cap \partial^* E_j= \Gamma^+_E \cap \partial^* E_j\,.
\end{equation}
Note that the above sequences can be actually finite if after finitely many steps the set $\Gamma^+_{E_j}$ remains empty. We can now prove the following property.
\begin{lemma}\label{lm2}
With the above notation, we have
\begin{equation}\label{lemma2}
\sum r_j^{N-1} \leq \frac 2{\omega_{N-1}}\, \H^{N-1}(\Gamma^+_E)\,.
\end{equation}
\end{lemma}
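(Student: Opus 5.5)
Each step removes at least $(1-\delta)\omega_{N-1}r_j^{N-1}$ of $\H^{N-1}$-measure from $\Gamma^+_E$, and these removed pieces are disjoint across steps; summing then gives~(\ref{lemma2}).

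First I will show that at step $j$ the lost part $\Gamma^+_{E_{j-1}}\setminus\Gamma^+_{E_j}$ has measure at least $(1-\delta)\omega_{N-1}r_j^{N-1}$. By~(\ref{noth2}), $\Gamma^+_{E_j}=\Gamma^+_{E_{j-1}}\cap\partial^*E_j$, so
\[
\Gamma^+_{E_{j-1}}\setminus\Gamma^+_{E_j}=\Gamma^+_{E_{j-1}}\setminus\partial^*E_j=\big(\partial^*E_{j-1}\setminus\partial^*E_j\big)\cap\Gamma^+_{E_{j-1}}.
\]
Since $r_j=r^{j-1}(x_j)$ is a radius at which (\ref{uno})--(\ref{quattro}) hold for $E_{j-1}$, Lemma~\ref{lemma1} applies with $E_{j-1}$ in place of $E$. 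Then (\ref{sette}) gives exactly
\[
\H^{N-1}\big(\Gamma^+_{E_{j-1}}\setminus\Gamma^+_{E_j}\big)\ge(1-\delta)\omega_{N-1}r_j^{N-1}.
\]
One point needs checking: $x_j$ must be a point of $(N-1)$-density $1$ for $\Gamma^+_{E_{j-1}}$. I will handle this by assuming that the supremum in the choice of $x_j$ is taken only over such density points, which is $\H^{N-1}$-a.e.\ point by Theorem~\ref{hnleb}. Alternatively, I will note that the existence of a positive radius $r$ satisfying~(\ref{tre}) is all that Lemma~\ref{lemma1} actually uses.

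Next, the sets $\Gamma^+_{E_{j-1}}\setminus\Gamma^+_{E_j}$ are pairwise disjoint, because $\{\Gamma^+_{E_j}\}$ is a decreasing sequence. They are also all contained in $\Gamma^+_E=\Gamma^+_{E_0}$. Summing the estimate over $j$ therefore yields
\[
(1-\delta)\,\omega_{N-1}\sum_j r_j^{N-1}\le\H^{N-1}(\Gamma^+_E).
\]
Since $\delta$ is small, $(1-\delta)^{-1}\le 2$, and this gives~(\ref{lemma2}).

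The only delicate point is this justification that Lemma~\ref{lemma1} is applicable at each step. That requires $r_j$ to be admissible in~(\ref{defr1}), which holds by the definition of the maximum, and $x_j$ to be a density point, which holds by the choice just described. Everything else is bookkeeping, so I expect no real obstacle beyond this.
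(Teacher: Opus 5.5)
Your proof is correct and is essentially the paper's argument: you use (\ref{noth2}) to identify $\Gamma^+_{E_{j-1}}\setminus\Gamma^+_{E_j}$ with the set bounded from below in (\ref{sette}) (applied with $E_{j-1}$, $x_j$, $r_j$). You then sum over these pairwise disjoint subsets of $\Gamma^+_E$ using $(1-\delta)^{-1}\le 2$. Your side remark on applying Lemma~\ref{lemma1} at $x_j$ addresses a point the paper passes over silently. Of your two justifications, the second is the cleaner one: the proof of Lemma~\ref{lemma1} only uses (\ref{uno})--(\ref{quattro}) at the radius $r_j$, and these hold by the definition of $r^{j-1}(x_j)$.
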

\begin{proof}
By~(\ref{noth2}) we have
\[
\Gamma^+_{E_{j-1}}\setminus \Gamma^+_{E_j} = \Gamma^+_{E_{j-1}}\setminus \partial^* E_j = \Gamma^+_{E_{j-1}}\cap \big( \partial^* E_{j-1}\setminus \partial^* E_j\big)\,.
\]
Hence, by~(\ref{sette}) of Lemma~\ref{lemma1} and since $\delta$ is small we have
\[
\H^{N-1}\big( \Gamma^+_{E_{j-1}}\setminus \Gamma^+_{E_j} \big) \geq (1-\delta) \omega_{N-1} r_j^{N-1} \geq \frac 12\, \omega_{N-1} r_j^{N-1}\,.
\]
The claim immediately follows since by construction the sequence $\Gamma^+_{E_j}$ is decreasing, and so
\[
\sum r_j^{N-1} \leq \frac 2{\omega_{N-1}}\, \sum \H^{N-1} \big(\Gamma^+_{E_{j-1}}\setminus \Gamma^+_{E_j} \big) 
\leq \frac 2{\omega_{N-1}}\,  \H^{N-1} \big(\Gamma^+_E \big) \,.
\]
\end{proof}

Since the sequence $\{E_j\}$ is increasing, we can define $E_\infty= \cup E_j$. We can show that the set $E_\infty$ is only slightly larger than $E$, and the corresponding set $\Gamma^+_{E_\infty}$ is empty.

\begin{lemma}\label{34}
With the above notation, we have that
\begin{gather}
E_\infty\supseteq E,\, \qquad E_\infty \cap \Omega = E\cap \Omega,\, \qquad |E_\infty\setminus E| \leq 8 \delta \H^{N-1}(\Gamma^+_E)\,, \label{volinf} \\
P(E_\infty) \leq P(E) + 2(1+5N) \delta \H^{N-1}(\Gamma^+_E) \,, \label{popepi} \\
P(E_\infty) \geq P(E) - 2(N+3) \delta\H^{N-1}(\Gamma^+_E)\,, \label{qual}\\
\H^{N-1}\big(\Gamma^+_{E_\infty}\big)=0\,. \label{no+}
\end{gather}
\end{lemma}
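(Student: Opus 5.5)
The plan is to sum the single-step estimates of Lemma~\ref{lemma1} over the recursion, using Lemma~\ref{lm2} to control the total, and to obtain~(\ref{no+}) from the near-maximality of the radii $r_j$.

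\emph{Inclusions and volume.} Each $E_j=E_{j-1}\cup(\C_j\setminus\Omega)$ satisfies $E_j\supseteq E_{j-1}$ and $E_j\cap\Omega=E_{j-1}\cap\Omega$. Passing to the union gives the first two claims in~(\ref{volinf}). For the volume, (\ref{sei}) gives $|E_j\setminus E_{j-1}|\le 4\delta\omega_{N-1}r_j^{N-1}\cdot r_j\le 4\delta\omega_{N-1}r_j^{N-1}$, since $r_j\le1$. Summing and applying Lemma~\ref{lm2} yields $|E_\infty\setminus E|\le 8\delta\H^{N-1}(\Gamma^+_E)$.

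\emph{Perimeter bounds.} At each step, (\ref{otto}) and (\ref{sette}) give
\[
P(E_j)-P(E_{j-1})\le(1+5N\delta)\omega_{N-1}r_j^{N-1}-(1-\delta)\omega_{N-1}r_j^{N-1}=(5N+1)\delta\omega_{N-1}r_j^{N-1}.
\]
Similarly, (\ref{ottoemezzo}) and (\ref{nove+}) give $P(E_j)-P(E_{j-1})\ge-(N+3)\delta\omega_{N-1}r_j^{N-1}$. Summing these telescoping estimates and applying Lemma~\ref{lm2} yields the bounds~(\ref{popepi}) and~(\ref{qual}) for every $P(E_j)$, uniformly in $j$. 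To pass to $E_\infty$, note that $\Chi{E_j}\to\Chi{E_\infty}$ in $L^1$ because the volumes are bounded. Lower semicontinuity then gives~(\ref{popepi}) for $E_\infty$. For~(\ref{qual}), lower semicontinuity alone is not enough. Instead we control the tail: $\partial^*E_\infty\triangle\partial^*E_j$ is contained, up to null sets, in $\bigcup_{k>j}\overline{\C_k}$ together with the boundaries of the $E_k$ there. The tail of the series $\sum_k(1+5N\delta)\omega_{N-1}r_k^{N-1}$ tends to $0$, so $P(E_j)\to P(E_\infty)$. Concretely, $|P(E_\infty)-P(E_j)|\le\sum_{k>j}\big(\H^{N-1}(\partial^*E_k\setminus\partial^*E_{k-1})+\H^{N-1}(\partial^*E_{k-1}\setminus\partial^*E_k)\big)$, and each of these sums is controlled by the summable sequence $r_k^{N-1}$.

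\emph{Emptiness of $\Gamma^+_{E_\infty}$.} This is the main obstacle. Suppose $\H^{N-1}(\Gamma^+_{E_\infty})>0$. Two facts are available.
\begin{itemize}
\item By the same argument as~(\ref{noth2}), together with the local convergence above, $\Gamma^+_{E_\infty}\subseteq\Gamma^+_{E_j}$ for every $j$ up to null sets. This holds because a point outside all the closed cylinders sees $E_\infty$ equal to some $E_j$ near it.
\item Lemma~\ref{lm2} gives $r_j\to0$, so that $\sup_{\Gamma^+_{E_{j-1}}}r^{j-1}\le 2r_j\to0$.
\end{itemize}
Now take $x$ a density point of $\Gamma^+_{E_\infty}$, applying Lemma~\ref{lemma0} to $E_\infty$. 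We get some $\bar r>0$ such that (\ref{uno})--(\ref{quattro}) hold for $E_\infty$ at every $r<\bar r$. Fix $r<\bar r$ and a slightly smaller $\delta'<\delta$, and choose $x$ away from $\bigcup_k\overline{\C_k}$ in a quantitative sense. The key estimate is that, for $j$ large, the sets $E_j$ and $E_\infty$ differ inside $C(x,(1+\delta)r)$ by a volume and a perimeter at most $C\sum_{k>j}r_k^{N-1}$. Then the strict inequalities for $E_\infty$ transfer to $E_j$ at the fixed radius $r$, with constants $\delta'<\delta$ absorbing the error. Hence $r^{j}(x)\ge r$ for all large $j$. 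This contradicts $\sup r^{j}\to0$.

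The delicate point is the transfer of~(\ref{tre}): one needs $\Gamma^+_{E_j}\supseteq\Gamma^+_{E_\infty}$, which the first fact above provides. Inequalities~(\ref{treemezzo}) and~(\ref{quattro}) for $E_j$ follow from the perimeter tail estimate. The condition~(\ref{quattro}) does not depend on $j$.
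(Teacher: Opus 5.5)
\textbf{Verdict: genuine gap.} Your overall plan matches the paper's: telescope (\ref{sette})--(\ref{otto}) and (\ref{ottoemezzo})--(\ref{nove+}), then get a contradiction from $r_{j+1}\ge r^j(x)/2$ and $r_j\to0$. Your volume bound is fine, and so is (\ref{popepi}) via lower semicontinuity. The gap is the limit step on which both (\ref{qual}) and (\ref{no+}) rest. You need $P(E_j)\to P(E_\infty)$, and, to transfer (\ref{treemezzo}) from $E_\infty$ to $E_j$, you need $\H^{N-1}(\partial^*E_j\setminus\partial^*E_\infty)$ to become small.

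Your justification, that ``a point outside all the closed cylinders sees $E_\infty$ equal to some $E_j$ near it'', is false. The radii $r_k$ tend to $0$, but the centres $x_k$ are chosen in the not-yet-covered set $\Gamma^+_{E_{k-1}}$. They can therefore accumulate at every point of a set of positive $\H^{N-1}$-measure that lies in no $\overline{\C_k}$; for instance, take $\Gamma^+_E$ to be an open dense part of a flat piece of $\partial^*\Omega$ whose complement in that piece is fat. Every neighbourhood of such a point meets infinitely many cylinders, so $E_\infty$ agrees with no $E_j$ there.

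The containment $\partial^*E_\infty\triangle\partial^*E_j\subseteq\bigcup_{k>j}\overline{\C_k}$ does hold up to null sets, but only via a density argument you do not give. Even granting it, it does not yield your displayed tail inequality. Inside $\C_k$ the set is modified again by all later cylinders, so the part of $\partial^*E_\infty\triangle\partial^*E_j$ lying there is not controlled by the single-step quantity $\H^{N-1}(\partial^*E_k\triangle\partial^*E_{k-1})$. The missing ingredient is exactly Proposition~\ref{genprop}: $\partial^*E_\infty=\bigcap_k\bigcup_{j\ge k}\partial^*E_j$ up to null sets. The paper proves it by slicing and uses it both for (\ref{qual}) and for transferring (\ref{treemezzo}).

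In this monotone setting your inequality can also be rescued cheaply:
\begin{itemize}
\item Since $E_{k-1}\subseteq E_k$, the outer normals coincide on $\partial^*E_{k-1}\cap\partial^*E_k$, because nested half-space blow-ups through the origin are equal.
\item Hence $|D\chi_{E_k}-D\chi_{E_{k-1}}|(\R^N)=\H^{N-1}(\partial^*E_k\triangle\partial^*E_{k-1})\le(2+6N\delta)\omega_{N-1}r_k^{N-1}$, by (\ref{otto}) and (\ref{ottoemezzo}).
\item So $D\chi_{E_j}$ is Cauchy in total variation. Its limit must be $D\chi_{E_\infty}$, since $\chi_{E_j}\to\chi_{E_\infty}$ in $L^1_{\rm loc}$.
\item The restriction of $\H^{N-1}$ to $\partial^*E_j\triangle\partial^*E_\infty$ is bounded by $|D\chi_{E_j}-D\chi_{E_\infty}|$, which gives exactly your tail estimate, globally and locally.
\end{itemize}

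With that estimate in hand, your step ``choose $x$ away from the cylinders in a quantitative sense'' becomes unnecessary. As written it is also circular, since you fixed $r$ depending on $x$.

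Finally, the inclusion $\Gamma^+_{E_\infty}\subseteq\Gamma^+_{E_j}$ is true, but it should be proved by densities rather than by the false locality claim:
\begin{itemize}
\item A point of $\Gamma^+_{E_\infty}\setminus\partial^*E$ has density $0$ for $E$, hence for $E_\infty\cap\Omega=E\cap\Omega$, which forces $\nu_{E_\infty}=-\nu_\Omega$, a contradiction.
\item At a point of $\partial^*E\cap\partial^*E_\infty$, the sandwich $E\subseteq E_j\subseteq E_\infty$ gives $E_j$ the same half-space blow-up, so the point lies in $\Gamma^+_{E_j}$.
\end{itemize}
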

\begin{proof}
Since the sequence $\{E_j\}$ is increasing, the validity of~(\ref{volinf}) immediately comes by~(\ref{sei}) and~(\ref{lemma2}), keeping in mind that every $r_j$ is at most $1$.\par

Putting together~(\ref{sette}) and~(\ref{otto}), for every $j$ we have
\begin{equation}\label{ciaf}\begin{split}
P(E_j)-P(E_{j-1}) &= \H^{N-1} \big(\partial^* E_j \setminus \partial^* E_{j-1} \big)- \H^{N-1} \big(\partial^* E_{j-1} \setminus \partial^* E_j \big)\\
&\leq (1+5N\delta) \omega_{N-1}  r_j^{N-1} - (1-\delta) \omega_{N-1} r_j^{N-1}
= (1+5N)\delta \omega_{N-1}  r_j^{N-1}\,,
\end{split}\end{equation}
which implies
\[
P(E_j) \leq P(E) + (1+5N)\delta \omega_{N-1} \sum_{i=1}^j r_i^{N-1}\,.
\]
In a similar way, putting together~(\ref{ottoemezzo}) and~(\ref{nove+}), for every $j$ we have
\[\begin{split}
P(E_j)-P(E_{j-1}) &= \H^{N-1} \big(\partial^* E_j \setminus \partial^* E_{j-1} \big)- \H^{N-1} \big(\partial^* E_{j-1} \setminus \partial^* E_j \big)\\
&\geq (1-3\delta)\omega_{N-1}  r_j^{N-1}-(1+N\delta)\omega_{N-1}r_j^{N-1}
= -(N+3)\delta \omega_{N-1} r_j^{N-1}\,,
\end{split}\]
which implies
\[
P(E_j) \geq P(E) - (N+3)\delta \omega_{N-1} \sum_{i=1}^j r_i^{N-1}\,.
\]
We can now notice that $P(E_j)\to P(E_\infty)$ by the general Proposition~\ref{genprop} below, which can be applied in our case thanks to~(\ref{otto}) and Lemma~\ref{lm2}. As a consequence, the last two estimates together with~(\ref{lemma2}) imply~(\ref{popepi}) and~(\ref{qual}).\par

Let us now show that
\begin{equation}\label{onth}
\H^{N-1} \big( \Gamma^+_{E_\infty} \setminus \Gamma^+_E\big)=0\,.
\end{equation}
Indeed, let $x$ be a point of $(N-1)$-dimensional density $1$ with respect to $\Gamma^+_{E_\infty}\setminus \Gamma^+_E$. Then, $x$ has density $1/2$ with respect to $E_\infty$; since $E_\infty\supseteq E$, if $x$ has density $1/2$ also with respect to $E$ then the outer normal vector is the same, and then $x\in \Gamma^+_{E_\infty}\cap \Gamma^+_E$, against the assumption. As a consequence, the density of $x$ with respect to $E$ is not $1/2$, thus it is $0$ because $E\subseteq E_\infty$. But then, since $E_\infty=E$ inside $\Omega$, the density of $x$ with respect to $E_\infty\cap\Omega$ is $0$, and since $x\in\partial^*\Omega$ this implies that $\nu_{E_\infty}(x)=-\nu_\Omega(x)$, against the fact that $x\in \Gamma^+_{E_\infty}$. The property~(\ref{onth}) is then established.

As a consequence, to show~(\ref{no+}) we take a point $x$ of $(N-1)$-dimensional density $1$ for $\Gamma^+_E \cap \Gamma^+_{E_\infty}$, and we seek a contradiction. We fix a small $r$ such that
\begin{gather}
|C^+(x,r)\cap (E_\infty \cup \Omega)|\leq \delta^2\omega_{N-1}r^N\,, \label{uno'} \\
|C^-(x,r)\setminus(E_\infty \cap \Omega)|\leq \frac{\delta^2} 2 \, \omega_{N-1}r^N\,,\label{due'}\\
\H^{N-1}\big(\Gamma^+_{E_\infty} \cap \Gamma^+_E\cap C_{\delta r}(x,r)\big)\geq (1-\delta)\omega_{N-1}r^{N-1}\,,\label{tre'}\\
\H^{N-1}\big(\partial^* E_\infty \cap C(x,(1+\delta) r)\big) \leq \big(1+(N-1/2)\delta\big)\omega_{N-1}r^{N-1}\,,\label{treemezzo'}\\
\H^{N-1}\big(\partial^*\Omega\cap C(x,(1+\delta)r)\big)\leq (1+N\delta) \omega_{N-1}r^{N-1}\,.\label{quattro'}
\end{gather}
Notice that these properties are almost identical to~(\ref{uno})--(\ref{quattro}), the only difference being that the set $E$ is replaced by $E_\infty$ everywhere, that $\Gamma^+_E$ in~(\ref{tre}) becomes $\Gamma^+_{E_\infty}\cap\Gamma^+_E$ in~(\ref{tre'}), and that $\delta^2$ in~(\ref{due}) has been replaced by $\delta^2/2$ in~(\ref{due'}), as well as $N\delta$ in~(\ref{treemezzo}) has been replaced by $(N-1/2)\delta$ in~(\ref{treemezzo'}). The existence of such a $r$ can be proved exactly as in Lemma~\ref{lemma0}, since $x$ is a point of $(N-1)$-dimensional density $1$ for $\Gamma^+_E \cap \Gamma^+_{E_\infty}$, hence also for $\partial^* E_\infty$ (and replacing $N$ with $N-1/2$ is not a problem because also $(1+\delta)^{N-1}<1+(N-1/2)\delta$ is true, since $\delta$ is small). Now, notice that $x$ has density $1/2$ with respect to both $E$ and $E_\infty$, thus also with any intermediate $E_j$, and in particular $x\in \Gamma^+_{E_j}$ for every $j$. Take now an index $j$; since $E_\infty\supseteq E_j$, the validity of~(\ref{uno'}) with $r$ implies that of~(\ref{uno}), \emph{with the same value of $r$}, with $E_j$ in place of $E$. Analogously, $\Gamma^+_{E_\infty}\cap \Gamma^+_E\subseteq \Gamma^+_{E_j}$, because every point in $\Gamma^+_{E_\infty}\cap \Gamma^+_E$ has density $1/2$ with respect to both $E_\infty$ and $E$, thus also with respect to any $E_j$, hence it is contained in every $\Gamma^+_{E_j}$; as a consequence, the validity of~(\ref{tre'}) with $r$ implies the validity, with the same $r$, of~(\ref{tre}) with $\Gamma^+_{E_j}$ in place of $\Gamma^+_E$. Property~(\ref{quattro'}) is exactly the same as~(\ref{quattro}). The property~(\ref{due}) for $E_j$ does not immediately follow from~(\ref{due'}), because $E_\infty\supseteq E_j$ gives that $|C^-(x,r)\setminus(E_\infty \cap \Omega)|$ is smaller, and not larger, than $|C^-(x,r)\setminus(E_j \cap \Omega)|$. Nevertheless, the presence of $\delta^2/2$ in place of $\delta^2$ actually allows to infer~(\ref{due}) for $E_j$ from~(\ref{due'}), again with the same value of $r$, as soon as
\[
|E_\infty \setminus E_j |\leq \frac{\delta^2} 2\, \omega_{N-1} r^N\,,
\]
and this is surely true for every $j$ large enough since as already noticed $\Chi{E_j}\to \Chi{E_\infty}$ in $L^1$. It remains to deduce~(\ref{treemezzo}) with $\partial^* E_j$ in place of $\partial^* E$ from~(\ref{treemezzo'}). However, we know that $\Chi{E_j\cap C(x,r)}\to \Chi{E_\infty\cap C(x,r)}$, and
\[
\sum_{j\in\N} \H^{N-1}(\partial^* E_{j+1}\setminus \partial^* E_j) <+\infty\,,
\]
which is clear because the estimate~(\ref{ciaf}) holds and $\sum_j r_j^{N-1}$ converges. Therefore, we can apply Proposition~\ref{genprop} to obtain that $\partial^* E_\infty = \cap_{k\in\N} \cup_{j\geq k} \partial^* E_j$. As a consequence, for every $j$ large enough we have
\[
\H^{N-1}(\partial^* E_j \setminus \partial^* E_\infty) \leq \frac\delta 2\,\omega_{N-1}r^{N-1}
\]
and then for those $j$ in fact~(\ref{treemezzo}) with $\partial^* E_j$ in place of $\partial^* E$ holds thanks to~(\ref{treemezzo'}).\par

Summarizing, the validity of~(\ref{uno'})--(\ref{quattro'}) implies the validity of~(\ref{uno})--(\ref{quattro}) with $E_j$ and $\Gamma^+_{E_j}$ in place of $E$ and $\Gamma^+_E$ for every $j$ large enough. Since the values of $r^j$ have been defined via the analogue of~(\ref{defr1}), this implies that $r^j(x)\geq r$ for every $j\gg 1$. And in turn, since by definition for every $j$ we have $r_{j+1}\geq r^j(x)/2$, we obtain $r_j\geq r/2$ for every $j\gg 1$. Finally, this is impossible since $r>0$ while $r_j \to 0$ by~(\ref{lemma2}), yielding the desired contradiction.
\end{proof}

Summarizing, by now we know that the set $E_\infty$, which is only slightly larger than $E$, does not have points in $\Gamma^+_{E_\infty}$; in other words, $\H^{N-1}$-a.e.\ point of $\partial^* E_\infty\cap \partial^* \Omega$ satisfies $\nu_{E_\infty}=-\nu_\Omega$, that is, $\partial^* E_\infty\cap\partial^* \Omega=\Gamma^-_{E_\infty}$. We want now to understand how large this latter set can be. The fact that $E_\infty\supseteq E$ but $E_\infty\cap\Omega = E\cap \Omega$ implies that $\Gamma^-_{E_\infty}\supseteq \Gamma^-_E$, and we can easily control $\H^{N-1}(\Gamma^-_{E_j}\setminus \Gamma^-_E)$ thanks to~(\ref{nove}). However, while it is clear that $\Gamma^-_{E_\infty}$ contains the increasing union of the sets $\Gamma^-_{E_j}$, in principle there is the risk that it is strictly larger. We will exclude this by means of Corollary~\ref{corlapi}, which is a consequence of the following general result.

\begin{prop}\label{genprop}
Let $\{F_j\}$ be a sequence of sets of finite perimeter such that $\Chi{F_j} \to \Chi F$ in $L^1$ and $\sum_j \H^{N-1}(\partial^* F_{j+1}\setminus \partial^* F_j)<+\infty$. Then $\partial^* F =\cap_{k\in\N} \cup_{j\geq k} \partial^* F_j$ up to $\H^{N-1}$-negligible sets. In particular, $P(F)=\lim_{j \to +\infty} P(F_j)$.
\end{prop}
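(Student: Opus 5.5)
Set $L:=\cap_{k}\cup_{j\geq k}\partial^* F_j$, the $\limsup$ of the reduced boundaries. The plan is to show two inclusions up to $\H^{N-1}$-null sets: $\partial^*F\subseteq L$ and $L\subseteq \partial^*F$. Then we deduce the convergence of perimeters.

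The key preliminary observation is a Borel--Cantelli type property. Set $S_j:=\partial^*F_{j+1}\triangle\partial^*F_j$. Since $P(F_{j+1})-P(F_j)=\H^{N-1}(\partial^*F_{j+1}\setminus\partial^*F_j)-\H^{N-1}(\partial^*F_j\setminus\partial^*F_{j+1})$ and the positive parts are summable, the sequence $P(F_j)$ is bounded. Indeed $P(F_j)\leq P(F_1)+\sum_i\H^{N-1}(\partial^*F_{i+1}\setminus\partial^*F_i)$. Hence $F$ has finite perimeter by lower semicontinuity.

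I then want $\sum_j\H^{N-1}(S_j)<\infty$. The lost part $\partial^*F_j\setminus\partial^*F_{j+1}$ is controlled only through the telescoping bound $\sum_j \H^{N-1}(\partial^*F_j\setminus\partial^*F_{j+1})\leq P(F_1)+\sum_j\H^{N-1}(\partial^*F_{j+1}\setminus\partial^*F_j)<\infty$, using $P(F_j)\geq0$. So the full symmetric differences are summable. Consequently $\H^{N-1}$-a.e.\ point belongs to only finitely many $S_j$. Thus for a.e.\ $x$ the indicator of $\{x\in\partial^*F_j\}$ is eventually constant, and $L$ coincides a.e.\ with $\liminf_j\partial^*F_j$. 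Moreover $\H^{N-1}(\partial^*F_j\triangle L)\leq\sum_{i\geq j}\H^{N-1}(S_i)\to0$, so $P(F_j)\to\H^{N-1}(L)$. It therefore suffices to prove $\partial^*F=L$ a.e., and this yields the final claim.

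For $L\subseteq\partial^*F$ I use localized lower semicontinuity. For every open set $A$ we have $P(F;A)\leq\liminf_j P(F_j;A)=\H^{N-1}(L\cap A)$, so $|D\chi_F|\leq\H^{N-1}\llcorner L$ as measures. This gives $\partial^*F\subseteq L$ a.e.

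The reverse inclusion is the main obstacle, since mass of $\partial^*F_j$ could cancel in the limit, e.g.\ via thin slivers collapsing. I will exploit the eventual stabilization. Fix $k$ and set $L_k:=\cap_{j\geq k}\partial^*F_j$; since $L=\cup_k L_k$ a.e., it is enough to treat each $L_k$. At an $\H^{N-1}$-density point $x$ of $L_k$ (with $\H^{N-1}(L_k)<\infty$), consider the nonnegative measures $\mu_j:=\H^{N-1}\llcorner(\partial^*F_j\setminus L_k)$. Their total mass is at most $\sum_{i\geq k}\H^{N-1}(S_i)=:\eta_k$. For $j\geq k$, each $D\chi_{F_j}$ restricted to $L_k$ equals $\nu_{F_j}\H^{N-1}\llcorner L_k$.

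The difficulty is that the normals could oscillate in sign. However, a.e.\ on $L_k$ the normal $\nu_{F_j}$ is constant in $j$. Indeed, $x\in\partial^*F_j\cap\partial^*F_{j+1}$ forces $\nu_{F_j}(x)=\pm\nu_{F_{j+1}}(x)$. A sign change would mean that $x$ has density $1/2$ in $F_j\triangle F_{j+1}$ with normals matching, and summing $|F_j\triangle F_{j+1}|$ is not enough to exclude this. Instead I would note that a sign flip forces $x\in\partial^*(F_j\triangle F_{j+1})$. Then I would argue via Gauss--Green on $F$ directly: $D\chi_{F_j}\to D\chi_F$ weakly-$*$ with $D\chi_{F_j}=\nu_{F_j}\H^{N-1}\llcorner L_k+\text{error}$, where the error has mass at most $\eta_k$. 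Passing to a subsequence along which $\nu_{F_j}\to\nu$ weakly-$*$ in $L^\infty(L_k)$ gives $|D\chi_F|\geq|\nu|\H^{N-1}\llcorner L_k-\eta_k$ in mass. The remaining task, and what I expect to be the hardest step, is showing $|\nu|=1$ a.e. For this I would use the blow-up at density points of $L_k$: there the sets $F_j$ (for $j\geq k$) are all close to half-spaces on a common scale, which excludes the oscillating cancellation. Finally I let $k\to\infty$, so that $\eta_k\to0$.
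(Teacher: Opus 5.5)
Your preliminary reductions are correct: the symmetric differences $\H^{N-1}(\partial^*F_{j+1}\triangle\partial^*F_j)$ are summable, $\H^{N-1}(\partial^*F_j\triangle L)\to0$, and hence $P(F_j)\to\H^{N-1}(L)$. The lower semicontinuity step is also correct. For open $A$ you get $P(F;A)\le\lim_j P(F_j;A)=\H^{N-1}(L\cap A)$, which gives $|D\chi_F|\le\H^{N-1}\llcorner L$ and hence $\partial^*F\subseteq L$ up to null sets. You label this paragraph as the other inclusion, but this is the one you prove. It is a real shortcut compared with the slicing argument the paper uses for the same inclusion.

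The gap is in $L\subseteq\partial^*F$, where you reduce everything to $|\nu|=1$ a.e.\ on $L_k$ and then stop. Write $\nu_{F_j}=\sigma_j\nu_{L_k}$ with $\sigma_j\in\{\pm1\}$. Then $|\nu|=1$ says exactly that the signs $\sigma_j$ converge strongly. In other words, near a.e.\ point of $L_k$ the sets $F_j$ must look like one fixed half-ball at a scale independent of $j$, rather than switching orientation or becoming nearly empty or nearly full at that scale. That is the whole content of the inclusion, and ``close to half-spaces on a common scale'' is asserted, not proved.

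What your setup honestly gives is this: $\mu=\sum_{i\ge k}\H^{N-1}\llcorner(\partial^*F_{i+1}\triangle\partial^*F_i)$ is finite with $\mu(L_k)=0$. So $\partial^*F_j\setminus L_k$ has zero $(N-1)$-density at a.e.\ point of $L_k$, uniformly in $j$. A small boundary excess alone does not exclude the empty/full alternative.

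Two of your auxiliary claims are also false. First, $\nu_{F_j}$ need not be constant in $j\ge k$ on $L_k$. Take $F_j=B_2\cap\{x_N<0\}$ for $j\le k$ and $F_j=(B_2\cap\{x_N<0\})\triangle B_1$ for $j>k$. The only nonzero symmetric difference of boundaries is $\partial B_1$, yet the normal flips on the whole unit disk. Second, at a point where $\nu_{F_j}=-\nu_{F_{j+1}}$, the set $F_j\triangle F_{j+1}$ has density $1$, not $1/2$, so the point is not in $\partial^*(F_j\triangle F_{j+1})$.

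The missing idea is to forget the normals and slice, as the paper does. Take $x$ of $(N-1)$-density one for both $L$ and $L\setminus\partial^*F$, with an approximate tangent plane; this exists because $L\subseteq\bigcup_j\partial^*F_j$ is rectifiable.
\begin{itemize}
\item For small $r$, most lines parallel to the normal through the disk $D$ of radius $r$ meet $L\cap C(x,r)$ in exactly one point, at distance at least $r/2$ from the ends, and miss $\partial^*F$. Say this holds on a set of lines of measure at least $\frac34\omega_{N-1}r^{N-1}$.
\item Choose $k$ with $\H^{N-1}(L\triangle\partial^*F_k)<\frac14\omega_{N-1}r^{N-1}$ and $|F\triangle F_k|<\frac1{10}\omega_{N-1}r^N$.
\item Since orthogonal projection is $1$-Lipschitz, on lines of measure at least $\frac12\omega_{N-1}r^{N-1}$ the slice of $\partial^*F_k$ is that single point.
\item By Vol'pert's theorem, on those lines the slice of $F_k$ is a segment of length between $r/2$ and $3r/2$, while the slice of $F$ is empty or the whole segment.
\item Fubini then gives $|F\triangle F_k|\ge\frac14\omega_{N-1}r^N$, a contradiction.
\end{itemize}
No information on normals is needed. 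Making your ``common scale'' claim rigorous would require essentially this same slicing argument, applied to all $j\ge k$ at once, so the detour through weak limits of normals saves no work.
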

\begin{proof}
First of all, we notice that
\[
P(F_h) - P(F_1) = \sum_{j=1}^{h-1} \H^{N-1}(\partial^* F_{j+1}\setminus \partial^* F_j) - \H^{N-1}(\partial^* F_j\setminus \partial^* F_{j+1})\,,
\]
thus by assumption we deduce that $\sum_j \H^{N-1}(\partial^* F_{j+1}\setminus \partial^* F_j)<+\infty$, so we actually have
\begin{equation}\label{sercon}
\sum_{j\in\N} \H^{N-1}(\partial^* F_{j+1}\triangle \partial^* F_j)<+\infty\,.
\end{equation}
Observe that the above formula for $P(F_h)$ in particular implies that the perimeters of the sets $F_h$ are uniformly bounded, so by lower semicontinuity also $F$ is a set of finite perimeter. Now, let us denote for brevity $\Gamma_k = \cup_{j\geq k} \partial^* F_j$, and $\Gamma = \cap_{k\in\N} \Gamma_k$, so the first part of the claim is rewritten as $\partial^* F = \Gamma$. Notice that $\H^{N-1}(\Gamma)<+\infty$ by~(\ref{sercon}), and actually $\H^{N-1}(\Gamma\triangle \partial^* F_j)$ becomes arbitrarily small if $j$ is large; thus, $\H^{N-1}(\Gamma)=\lim_{j\to+\infty} P(F_j)$, so the whole claim follows once we prove that $\partial^* F =\Gamma$. We split the proof into two parts, corresponding to the two inclusions.

\step{I}{One has $\partial^* F \subseteq \Gamma$.}
We just need to show that $\partial^* F \subseteq \cup_j \partial^* F_j$, which can be rewritten as $\partial^* F\subseteq \Gamma_1$. Indeed, this will automatically give also that $\partial^* F\subseteq \Gamma_k$ for every $k$, since we can apply the result to the sequence which starts with $F_k$, for which the assumptions are obviously verified, and then we have also $\partial^* F \subseteq \cap_k \Gamma_k$, which is the thesis of this step.\par

Let us then assume by contradiction that $\H^{N-1}(\partial^* F \setminus \cup_j \partial^* F_j)>0$, and according to Theorem~\ref{hnleb} let $x$ be a point for which a tangent hyperplane, with normal vector $\nu\in\S^{N-1}$, exists. In particular, of course $\nu=\nu_F(x)$. Calling as usual $C(x,r)$ the cylinder centered at $x$, with axis parallel to $\nu_F(x)$, radius $r$ and height $2r$, for every $z\in C(x,r)$ we write $z=z' + z_n$, with $z_n$ parallel to $\nu_F(x)$ and $z'$ in the $(N-1)$-dimensional disk $D$ with radius $r$, centered at $x$ and perpendicular to $\nu_F(x)$. For brevity, for every set $G$ of finite perimeter and for every $z'\in D$ we call $G_{z'}$ the $1$-dimensional section given by the intersection of $G$ with the segment $S_{z'}=\big\{z' + t \nu_F(x): t\in (-r,r)\big\}$. Moreover, we call $\partial^* G_{z'}=\partial^* G \cap S_{z'}$, which by Vol'pert Theorem coincides with the boundary of the one-dimensional section $G_{z'}$ in $C(x,r)$ for $\H^{N-1}$-a.e. $z'\in D$. Let us now call
\[
H = \bigg\{z' \in D: \H^1(F_{z'})\in \Big(\,\frac r2, \frac 32\, r\,\Big),\, \# \big(\partial^* F_{z'}\big) = \#\big( \partial^* F_{z'}\setminus \cup_j \partial^* F_{j,z'}\big) = 1 \bigg\}\,.
\]
Since $x$ is a point of $(N-1)$-dimensional density $1$ for $\H^{N-1}\big(\partial^* F \setminus \cup_j \partial^* F_j\big)$ and the outer normal vector to $F$ at $x$ is $\nu_F(x)$, by the blow-up properties of the perimeter we have that $\H^{N-1}(H)/r^{N-1}$ can be taken arbitrarily close to $\omega_{N-1}$ if $r$ is small enough. We can then take $r$ so small that
\begin{equation}\label{tq}
\H^{N-1}(H) \geq \frac 23\, \omega_{N-1} r^{N-1}\,.
\end{equation}
Moreover, by assumption we have some $\ell$ such that
\begin{equation}\label{fordom0}
\sum_{j=\ell}^{+\infty} \H^{N-1}(\partial^* F_{j+1}\setminus \partial^* F_j) < \frac{\omega_{N-1}}3\,  r^{N-1}\,.
\end{equation}
Consider now $z'\in H$. By definition, the section $F_{z'}$ is a segment with length between $r/2$ and $3r/2$, and with one endpoint in $C(x,r)$. Since this endpoint does not belong to $\partial^* F_{\ell,z'}$, neither the section $F_{\ell,z'}$ nor its complement $S_{z'}\setminus F_{\ell,z'}$ can coincide with $F_{z'}$, so in particular
\[
v(z')  = \min\Big\{\H^1(F_{\ell,z'}\triangle F_{z'}) ,\, \H^1\big((S_{z'}\setminus F_{\ell,z'})\triangle F_{z'}\big) ,\, \frac r2\,\Big\} >0\,.
\]
Notice now that, for every $j>\ell$,
\begin{equation}\label{fordom}
\H^1(F_{j,z'}\triangle F_{z'})< v(z') \quad \Longrightarrow \quad \partial^* F_{j,z'}\setminus \partial^* F_{\ell,z'}\neq \emptyset\,.
\end{equation}
Indeed, by definition of $v(z')$, if $\H^1(F_{j,z'}\triangle F_{z'})< v(z')$ then the section $F_{j,z'}$ coincides neither with $F_{\ell,z'}$ nor with its complement $S_{z'}\setminus F_{\ell,z'}$. As a consequence, $\partial^* F_{j,z'}$ must contain some point which is not in $\partial^* F_{\ell, z'}$ unless the section $F_{j,z'}$ is either empty or the whole segment $S_{z'}$. And in turn, we can exclude this possibility since by definition $v(z')\leq r/2$ and the length of $F_{z'}$ is between $r/2$ and $3r/2$. Thus, (\ref{fordom}) has been proved.\par

For any $j>\ell$, we now set $H_j = \{z' \in H: \H^1(F_{j,z'}\triangle F_{z'})<v(z')\}$, and by~(\ref{fordom}) and by projection we know that
\begin{equation}\label{fordom2}
\H^{N-1}\big(\partial^* F_j\setminus \partial^* F_\ell\big) \geq \H^{N-1}(H_j)\,.
\end{equation}
Since by Fubini Theorem we have
\[
\big|F \triangle F_j \big| \geq \int_{H\setminus H_j} v(z') \,d\H^{N-1}(z')\,,
\]
recalling that $\Chi{F_j}\to\Chi{F}$ and that $v>0$ we deduce that $\H^{N-1}(H\setminus H_j)$ becomes arbitrarily small for $j$ large enough, and by~(\ref{tq}) this implies the existence of some large $h>\ell$ such that
\[
\H^{N-1}(H_h) \geq \frac{\omega_{N-1}}3\, r^{N-1}\,.
\]
Therefore, recalling~(\ref{fordom2}) and~(\ref{fordom0}), we obtain
\[
\frac{\omega_{N-1}}3\, r^{N-1} \leq \H^{N-1}(H_h)\leq \H^{N-1}\big(\partial^* F_h\setminus \partial^* F_\ell\big)
\leq \sum_{j=\ell}^{h-1} \H^{N-1}\big(\partial^* F_{j+1}\setminus \partial^* F_j\big)< \frac{\omega_{N-1}}3\,  r^{N-1}\,,
\]
so we have obtained the desired contradiction and this step is concluded.

\step{II}{One has $\partial^* F \supseteq \Gamma$.}
Let us now concentrate on the opposite inclusion; we assume that $\H^{N-1}(\Gamma\setminus \partial^* F)>0$ and we look for a contradiction. According to Theorem~\ref{hnleb}, we can take a point $x$ of $(N-1)$-dimensional density $1$ both for $\Gamma$ and for $\Gamma\setminus \partial^* F$, with tangent hyperplane orthogonal to a vector $\nu\in\S^{N-1}$. As in the first step, for a given $r>0$ we call $D$ the $(N-1)$-dimensional disk centered at $x$, with radius $r$ and perpendicular to $\nu$, and for every $z'\in D$ we call $S_{z'}$ the segment of length $2r$ centered at $z'$ and parallel to $\nu$. This time, we define the set
\[
H = \bigg\{ z'\in D: \#(\Gamma\cap S_{z'})=\#(\Gamma\cap S_{z'}\cap C_{r/2}(x,r))=1,\, \partial^* F\cap S_{z'} = \emptyset\bigg\}\,.
\]
In words, the points $z'\in D$ are those for which in the segment $S_{z'}$ there are no points of $\partial^* F$, while there is exactly one point of $\Gamma$, and this point has distance at least $r/2$ from both ends of $S_{z'}$. Since $x$ has $(N-1)$-dimensional density $1$ with respect to both $\Gamma$ and $\Gamma\setminus\partial^* F$, and thus $0$ with respect to $\partial^* F$, the set $H$ is an arbitrarily high portion of $D$ if $r$ is small, and so in particular we can fix a small $r$ such that
\[
\H^{N-1}(H) \geq \frac 34\, \omega_{N-1}\, r^{N-1}\,.
\]
Having fixed $r$, by~(\ref{sercon}) we can now take $k$ large enough so that
\begin{align}\label{thiseasy}
\sum_{j\geq k} \H^{N-1}(\partial^* F_{j+1}\triangle \partial^* F_j)< \frac{\omega_{N-1}}4\, r^{N-1}\,, && |F \triangle F_k| < \frac{\omega_{N-1}}{10} \, r^N\,.
\end{align}
The first estimate in particular gives $\H^{N-1}(\Gamma\triangle \partial^* F_k)<\omega_{N-1} r^{N-1}/4$, so that calling
\[
H' = \Big\{ z'\in H: (\Gamma\triangle\partial^* F_k)_{z'} = \emptyset\Big\}
\]
we have
\[
\H^{N-1}(H') \geq \frac {\omega_{N-1}}2\, r^{N-1}\,.
\]
Let us now consider $z'\in H$. In the section $S_{z'}$ there is exactly one point of $\partial^* F_k$, and this point has distance at least $r/2$ from the endpoints of $S_{z'}$. Hence, $F_k\cap S_{z'}$ is a segment with length between $r/2$ and $3r/2$. Instead, the section $S_{z'}$ does not intersect $\partial^* F$, thus $F\cap S_{z'}$ is either empty or the whole segment $S_{z'}$; in both cases, we have $\H^1(F \triangle F_k)\cap S_{z'})>r/2$. Therefore, by Fubini Theorem we have
\[
|F \triangle F_k| \geq \int_{z'\in H'} \H^1\big(F\triangle F_k)\cap S_{z'}\big)\, d\H^{N-1}(z') > \frac r2\, \H^{N-1}(H') \geq \frac {\omega_{N-1}}4\, r^N\,,
\]
and this contradicts the second estimate in~(\ref{thiseasy}). The desired inclusion is then proved, so also the second step of the proof is concluded.
\end{proof}

\begin{corollary}
\label{corlapi}
One has $(\partial^* E_\infty\setminus \partial^* E)\cap \partial^* \Omega=\cup_j (\partial^* E_j\setminus \partial^* E)\cap \partial^* \Omega$. In particular,
\begin{equation}\label{lapi}
\H^{N-1} \Big((\partial^* E_\infty\setminus \partial^* E)\cap \partial^* \Omega \Big)\leq 2 (N+1)\delta \H^{N-1}(\Gamma^+_E) \,.
\end{equation}
\end{corollary}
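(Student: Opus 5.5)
The plan is to apply Proposition~\ref{genprop} to the increasing sequence $F_j=E_j$, which converges to $F=E_\infty$ in $L^1$ by~(\ref{volinf}) and monotone convergence. The summability hypothesis holds because~(\ref{otto}) gives $\H^{N-1}(\partial^* E_{j}\setminus\partial^* E_{j-1})\le (1+5N\delta)\omega_{N-1}r_j^{N-1}$, and $\sum_j r_j^{N-1}<+\infty$ by Lemma~\ref{lm2}. Proposition~\ref{genprop} then yields
\[
\partial^* E_\infty=\bigcap_{k}\bigcup_{j\ge k}\partial^* E_j
\]
up to $\H^{N-1}$-negligible sets. In particular $\partial^* E_\infty\subseteq\cup_j\partial^* E_j$ up to null sets, so intersecting with $\partial^*\Omega$ and removing $\partial^* E$ gives the inclusion
\[
(\partial^* E_\infty\setminus\partial^* E)\cap\partial^*\Omega\subseteq\bigcup_j(\partial^* E_j\setminus\partial^* E)\cap\partial^*\Omega .
\]

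For the reverse inclusion I will show that the sets $(\partial^* E_j\setminus\partial^* E)\cap\partial^*\Omega$ are increasing in $j$ and are never removed afterwards. Take $z$ in one of them, at a point where all densities exist. Then $z$ has density $0$ for $E$, density $1/2$ for $E_j$ and for $\Omega$, and, as at the end of Lemma~\ref{lemma1}, $\nu_{E_j}(z)=-\nu_\Omega(z)$. This means $E_j$ essentially fills the outer half-ball of $\Omega$ at $z$, and $E_j\cap\Omega=E\cap\Omega$ has density $0$. For every $i\ge j$ (including $E_\infty$) we have $E_i\supseteq E_j$ and $E_i\cap\Omega=E\cap\Omega$. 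Since $E_j$ already has density $1$ in the outer half-ball and the inner half-ball is unchanged, the density of $E_i$ at $z$ is still $1/2$ with the same normal. Hence $z\in\partial^* E_i$ for all $i\ge j$, and in particular $z\in\partial^* E_\infty$. This also proves monotonicity.

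For the estimate~(\ref{lapi}), the set $(\partial^* E_j\setminus\partial^* E)\cap\partial^*\Omega$ is contained in $\bigcup_{i\le j}(\partial^* E_i\setminus\partial^* E_{i-1})\cap\partial^*\Omega$. Any point new at stage $i$ is either in $\partial^* E_{i-1}$ or not, and by the persistence just shown we may telescope. Applying~(\ref{nove}) to each step with $E_{i-1}$ in place of $E$ bounds the $i$-th piece by $(N+1)\delta\omega_{N-1}r_i^{N-1}$. Summing with~(\ref{lemma2}) gives $2(N+1)\delta\H^{N-1}(\Gamma^+_E)$, and monotone convergence of the increasing union finishes the proof.

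I expect the main obstacle to be the bookkeeping in the telescoping. A point could lie in $\partial^* E_{i-1}$ but not in $\partial^* E$, and a point could a priori leave and re-enter the boundary. The persistence argument is exactly what rules this out: it shows each point of the union is counted at its first appearance, where~(\ref{nove}) applies.
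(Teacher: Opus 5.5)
Your proposal is correct and follows essentially the paper's proof. Proposition~\ref{genprop} gives the inclusion $\subseteq$; the half-ball density argument, using $\nu_{E_j}=-\nu_\Omega$ and $E_\infty\cap\Omega=E_j\cap\Omega$, gives $\supseteq$; and~(\ref{nove}) summed against~(\ref{lemma2}) gives the estimate. One small correction to your last paragraph: persistence is not what makes the telescoping work, since for any sequence of sets a point of $\bigcup_j(\partial^*E_j\setminus\partial^*E)$ lies in $\partial^*E_i\setminus\partial^*E_{i-1}$ for the first index $i$ at which it enters, so~(\ref{lapi}) already follows from the forward inclusion alone, and persistence is needed only for the reverse inclusion.
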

\begin{proof}
We can apply Proposition~\ref{genprop} to the sets $E_j$, since by~(\ref{otto})
\[
\H^{N-1}(\partial^* E_{j+1}\setminus \partial^* E_j)\leq (1+5N\delta) \omega_{N-1}  r_j^{N-1}
\]
and thanks to Lemma~\ref{lm2}. We get then that $\partial^* E_\infty\subseteq \cup_j \partial^* E_j$, thus of course
\[
(\partial^* E_\infty\setminus\partial^* E)\cap \partial^*\Omega\subseteq \cup_j (\partial^* E_j\setminus \partial^* E)\cap \partial^*\Omega\,.
\]
To prove the opposite inclusion, taken a point $z$ contained in some $(\partial^* E_j\setminus \partial^* E)\cap \partial^*\Omega$ we only have to show that $z\in \partial^* E_\infty$. Since $z\in(\partial^* E_j\setminus \partial^* E)\cap \partial^*\Omega$, Lemma~\ref{lemma1} ensure that $\nu_{E_j}(z) = - \nu_\Omega(z)$. Thus, the point $z$ has density $1/2$ with respect to $E_j$ and density $1/2$ with respect to $\Omega\setminus E_j$; since $E_\infty\supseteq E_j$, we deduce that $z$ has density  at least $1/2$ with respect to $E_\infty$, and since $E_\infty\cap \Omega= E_j\cap\Omega$, we deduce that $z$ has density $1/2$ with respect to $\Omega\setminus E_\infty$. This implies that the density of $z$ with respect to $E_\infty$ is actually $1/2$, so $z\in \partial^* E_\infty$ and this proves the desired inclusion.

Therefore, thanks to~(\ref{nove}) and~(\ref{lemma2}) we have
\[\begin{split}
\H^{N-1} \Big((\partial^* E_\infty\setminus \partial^* E)&\cap \partial^* \Omega \Big)
= \H^{N-1} \Big(\bigcup_j (\partial^* E_j\setminus \partial^* E)\cap \partial^* \Omega \Big)\\
&\leq \sum_j \H^{N-1} \big( (\partial^* E_{j+1}\setminus \partial^* E_j)\cap \partial^* \Omega\big)
\leq (N+1)\delta  \omega_{N-1} \sum_j   r_j^{N-1}\\
&\leq 2 (N+1)\delta \H^{N-1}(\Gamma^+_E)\,,
\end{split}\]
which gives~(\ref{lapi}) and concludes the proof.
\end{proof}

Collecting all the above results, we have then the following claim.

\begin{prop}\label{finally}
Let $\Omega$ and $E$ be two sets of finite perimeter. Then, for every $\eps>0$ there exists a set $E'$ such that
\begin{gather}
E'\supseteq E,\, \qquad E'\cap \Omega = E\cap\Omega,\, \qquad |E'\setminus E| < \eps\,, \label{tredici}\\
\H^{N-1}(\Gamma^+_{E'}) = 0\,, \qquad \H^{N-1}(\Gamma^-_{E'}\setminus \Gamma^-_E) < \eps \H^{N-1}(\Gamma^+_E)\,, \label{quattordici}\\
|P(E') - P(E)| \leq \eps\,, \label{quindici}\\
\H^{N-1}(\partial^* E' \setminus \partial^* E) < \H^{N-1}(\Gamma^+_E)+\eps\,. \label{sedici}
\end{gather}
\end{prop}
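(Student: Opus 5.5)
The plan is to take $E'=E_\infty$, the set built by the recursive construction above, with a parameter $\delta$ chosen small in terms of $\eps$ and of $\H^{N-1}(\Gamma^+_E)$. If $\H^{N-1}(\Gamma^+_E)=0$, we simply take $E'=E$: then \eqref{tredici}, \eqref{quindici} and \eqref{sedici} are trivial, and \eqref{quattordici} holds (with the strict inequality read as the vacuous statement that $\Gamma^-_{E'}\setminus\Gamma^-_E$ is negligible). Otherwise we set $M=\H^{N-1}(\Gamma^+_E)\in(0,\infty)$, finite since $E$ has finite perimeter, and choose $\delta$ so small that Lemma~\ref{lemma0} and Lemma~\ref{lemma1} apply and, in addition,
\[
8\delta M<\eps,\qquad 2(1+5N)\delta M\le\eps,\qquad 2(N+3)\delta M\le \eps,\qquad 2(N+1)\delta<\eps .
\]

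Each item in the statement then follows from a result already proved. First, \eqref{tredici} is exactly \eqref{volinf}. Second, \eqref{quindici} follows by combining \eqref{popepi} and \eqref{qual}. Third, the first half of \eqref{quattordici} is \eqref{no+}. For the second half, we use that $E_\infty\supseteq E$ and $E_\infty\cap\Omega=E\cap\Omega$. A point of $\Gamma^-_{E_\infty}$ at which $E$ also has density $1/2$ has the same normal for $E$ and $E_\infty$, so it lies in $\Gamma^-_E$. Hence, up to $\H^{N-1}$-negligible sets,
\[
\Gamma^-_{E_\infty}\setminus\Gamma^-_E\subseteq(\partial^*E_\infty\setminus\partial^*E)\cap\partial^*\Omega .
\]
Corollary~\ref{corlapi} bounds the right-hand side by $2(N+1)\delta M<\eps M$.

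The remaining estimate \eqref{sedici} is the only one needing a little work. We write
\[
\H^{N-1}(\partial^*E_\infty\setminus\partial^*E)=P(E_\infty)-P(E)+\H^{N-1}(\partial^*E\setminus\partial^*E_\infty).
\]
The difference of perimeters is at most $2(1+5N)\delta M$ by \eqref{popepi}. So we need $\H^{N-1}(\partial^*E\setminus\partial^*E_\infty)\le M+O(\delta M)$.

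By Proposition~\ref{genprop}, applicable as in Corollary~\ref{corlapi}, we have $\partial^*E_\infty=\bigcap_k\bigcup_{j\ge k}\partial^*E_j$. Hence every point of $\partial^*E\setminus\partial^*E_\infty$ lies, up to a negligible set, in some $\partial^*E_{j-1}\setminus\partial^*E_j$. Summing \eqref{ottoemezzo} over $j$ and using \eqref{lemma2} gives
\[
\H^{N-1}(\partial^*E\setminus\partial^*E_\infty)\le(1+N\delta)\,\omega_{N-1}\sum_j r_j^{N-1}.
\]
Used this crudely, \eqref{lemma2} only gives the bound $2(1+N\delta)M$, which is too weak by a factor of $2$. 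This is the step I expect to be the main obstacle. To fix it, I would instead compare with the portion actually removed from $\Gamma^+$: by \eqref{sette}, each step removes at least $(1-\delta)\omega_{N-1}r_j^{N-1}$ of $\Gamma^+_{E_{j-1}}$, and these removed pieces are disjoint subsets of $\Gamma^+_E$. This gives
\[
\omega_{N-1}\sum_j r_j^{N-1}\le\frac{M}{1-\delta},
\]
the sharp form of \eqref{lemma2}. Then
\[
\H^{N-1}(\partial^*E\setminus\partial^*E_\infty)\le\frac{1+N\delta}{1-\delta}\,M .
\]
Together with the bound on $P(E_\infty)-P(E)$, this yields $\H^{N-1}(\partial^*E_\infty\setminus\partial^*E)<M+\eps$ once $\delta$ is small enough in terms of $\eps/M$.
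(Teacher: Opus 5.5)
Your proposal is correct and follows the paper's proof. You take $E'=E_\infty$ and read off (\ref{tredici}), (\ref{quattordici}) and (\ref{quindici}) from (\ref{volinf}), (\ref{no+}), (\ref{lapi}), (\ref{popepi}) and (\ref{qual}), and for (\ref{sedici}) you use the same key point as the paper, the sharp bound $\omega_{N-1}\sum_j r_j^{N-1}\le \H^{N-1}(\Gamma^+_E)/(1-\delta)$ from (\ref{sette}). The only difference is that you reach (\ref{sedici}) through $P(E_\infty)-P(E)$ plus the removed boundary controlled by (\ref{ottoemezzo}), whereas the paper bounds the added boundary directly via $\partial^*E'\setminus\partial^*E\subseteq\bigcup_j(\partial^*E_{j+1}\setminus\partial^*E_j)$ and (\ref{otto}), which is slightly shorter.
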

\begin{proof}
We fix
\[
\delta = \min \bigg\{\frac \eps{12 N  P(E)},\, \frac \eps{2(N+1)},\, \frac\eps{ 2(1+5N) P(E)},\, \frac 12\bigg\} \,,
\]
we perform the construction of the previous pages, and we set $E'=E_\infty$. We have to check that the properties~(\ref{tredici})--(\ref{sedici}) are satisfied.

The property~(\ref{tredici}) is true by construction and by~(\ref{volinf}), keeping in mind that $\Gamma^+_E\subseteq \partial^* E$.\par

The property~(\ref{quattordici}) is given by~(\ref{no+}), thanks to~(\ref{lapi}) and since by construction every point of $\Gamma^-_{E'}\setminus \Gamma^-_E$ must be contained in $\partial^* E_\infty\cap \partial^* \Omega$ and not in $\partial^* E$.\par

The property~(\ref{quindici}) is given by~(\ref{popepi}) and~(\ref{qual}).\par

Finally, to obtain the property~(\ref{sedici}), we first recall that $\partial^* E' \setminus \partial^* E\subseteq \cup_j \partial^* E_{j+1}\setminus \partial^* E_j$ by Proposition~\ref{genprop}, and then we use~(\ref{otto}) and~(\ref{sette}) and we keep in mind Lemma~\ref{lemma1} and the fact that the sets $\Gamma^+_{E_j}$ are decreasing to get that
\[\begin{split}
\H^{N-1}(\partial^* E' \setminus \partial^* E) &\leq \sum_j \H^{N-1}(\partial^* E_{j+1} \setminus \partial^* E_j)\\
&\leq \frac{1+5N\delta}{1-\delta} \sum_j  \H^{N-1}\big((\partial^* E_j \setminus \partial^* E_{j+1})\cap \Gamma^+_{E_j}\big)\\
&= \frac{1+5N\delta}{1-\delta} \sum_j  \H^{N-1}\big(\Gamma^+_{E_j} \setminus \Gamma^+_{E_{j+1}}\big)
\leq \frac{1+5N\delta}{1-\delta}\, \H^{N-1}(\Gamma^+_E)\,,
\end{split}\]
which gives~(\ref{sedici}) by the definition of $\delta$.
\end{proof}

The above proposition finally gives a precise definition of how to enlarge $E$ so to eliminate $\Gamma^+_E$; looking at Figure~\ref{FigGpme} to get an idea, we are basically ``pushing $\Gamma^+_E$ outside $E$'', and so outside $\Omega$. We need also to eliminate $\Gamma^-_E$ by ``pulling $\Gamma^-_E$ inside $E$'', so once again outside $\Omega$. Since Proposition~\ref{finally} only requires $E$ and $\Omega$ to be of finite perimeter, but not of finite volume, we can apply it to $\Omega$ and $\R^N\setminus E$. Hence, we get for free the following analogous of Proposition~\ref{finally}.
\begin{prop}\label{finally2}
Let $\Omega$ and $E$ be two sets of finite perimeter. Then, for every $\eps>0$ there exists a set $E'$ such that
\begin{gather}
E'\subseteq E,\, \qquad E'\cap \Omega = E\cap\Omega,\, \qquad |E\setminus E'| < \eps\,, \tag{\ref{tredici}'}\\
\H^{N-1}(\Gamma^-_{E'}) = 0\,, \qquad \H^{N-1}(\Gamma^+_{E'}\setminus \Gamma^+_E) < \eps \H^{N-1}(\Gamma^-_E)\,, \tag{\ref{quattordici}'}\\
|P(E')- P(E)|\leq \eps\,, \tag{\ref{quindici}'}\\
\H^{N-1}(\partial^* E' \setminus \partial^* E) < \H^{N-1}(\Gamma^-_E)+\eps\,. \tag{\ref{sedici}'}
\end{gather}
\end{prop}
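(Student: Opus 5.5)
The plan is to apply Proposition~\ref{finally} to the pair $(\Omega, G)$ with $G=\R^N\setminus E$, and then to define $E'=\R^N\setminus G'$, where $G'$ is the set it produces. The first step is to check that the hypotheses are met. Proposition~\ref{finally} needs only finite perimeter and not finite volume, and $\partial^* G=\partial^* E$ with $P(G)=P(E)$, so it applies.

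The second step is to record how the normals and the sets $\Gamma^\pm$ transform under complementation. For $\H^{N-1}$-a.e.\ $x\in\partial^*E$ we have $\nu_G(x)=-\nu_E(x)$. Hence a point of $\partial^*E\cap\partial^*\Omega$ lies in $\Gamma^+_G$ exactly when $\nu_E=-\nu_\Omega$, which gives
\[
\Gamma^+_G=\Gamma^-_E,\qquad \Gamma^-_G=\Gamma^+_E
\]
up to negligible sets. By the same argument, $\Gamma^\pm_{G'}=\Gamma^\mp_{E'}$.

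The third step translates each conclusion of Proposition~\ref{finally} into the corresponding primed statement.
\begin{itemize}
\item From $G'\supseteq G$ we get $E'\subseteq E$.
\item From $G'\cap\Omega=G\cap\Omega$ we get $\Omega\setminus G'=\Omega\setminus G$, that is, $E'\cap\Omega=E\cap\Omega$.
\item Since $G'\setminus G=E\setminus E'$, the volume bound gives $|E\setminus E'|<\eps$.
\item Using the dictionary of the second step, \eqref{quattordici} becomes $\H^{N-1}(\Gamma^-_{E'})=0$ and $\H^{N-1}(\Gamma^+_{E'}\setminus\Gamma^+_E)<\eps\,\H^{N-1}(\Gamma^-_E)$.
\item Since $\partial^*E'=\partial^*G'$ and $\partial^*E=\partial^*G$, \eqref{quindici} and \eqref{sedici} transfer verbatim, with $\H^{N-1}(\Gamma^+_G)=\H^{N-1}(\Gamma^-_E)$ on the right-hand side of the last one.
\end{itemize}

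The only delicate point I expect is the bookkeeping of the normal orientation under complementation, namely that $\nu_{\R^N\setminus E}=-\nu_E$ on the reduced boundary. Everything else is a direct substitution into Proposition~\ref{finally}.
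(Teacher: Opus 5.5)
Your proposal is correct and is exactly the paper's argument. The paper obtains Proposition~\ref{finally2} ``for free'' by applying Proposition~\ref{finally} to $\Omega$ and $\R^N\setminus E$, noting that finite volume is not required. You have simply written out the complementation bookkeeping ($\nu_{\R^N\setminus E}=-\nu_E$, hence $\Gamma^\pm_{\R^N\setminus E}=\Gamma^\mp_E$, together with the translation of each conclusion) that the paper leaves implicit.
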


\begin{remark}
\label{rem:inside}
It is important to observe that one cannot get Theorem~\mref{nobound} just applying once Proposition~\ref{finally} and once Proposition~\ref{finally2}. In fact, applying Proposition~\ref{finally} to $E$ one gets some set $E'$ with $\Gamma^+_{E'}=\emptyset$ and with $\Gamma^-_{E'}$ only slightly larger than $\Gamma^-_E$. Applying then Proposition~\ref{finally2} to $E'$, one gets a set $E''$ with $\Gamma^-_{E''}=\emptyset$. However, even though $\Gamma^+_{E'}=\emptyset$, the set $\Gamma^+_{E''}$ is slightly larger and then it could be nonempty. As a consequence, the proof of Theorem~\mref{nobound} needs another recursive procedure. We underline that the situation is so complicated because in Theorem~\mref{nobound} we want $F\cap\Omega=E\cap\Omega$. If we did not want this extra property, then things would be a bit simpler. In fact, in Lemma~\ref{lemma1} one could simply define $\widetilde E=E\cup \C$ instead of $\widetilde E = E \cup (\C\setminus \Omega)$; all the results of this section would then remain true, with the very same proof or even with a simpler one. And in that case, the final result would actually follow by applying the enlargement once to eliminate $\Gamma^+$ and once to eliminate $\Gamma^-$.
\end{remark}

\proofof{Theorem~\mref{nobound}}
We prove the result under the assumption that both $E$ and $\Omega$ are of finite perimeter, since the case when they are just of locally finite perimeter then follows via a standard argument. Let then $E$ and $\Omega$ have finite perimeter, and fix $\eps>0$. We apply Proposition~\ref{finally} with constant $\eps/2$ to $E$ getting a set $E_1$; then, we apply Proposition~\ref{finally2} with constant $\eps/4$ to $E_1$ getting a set $E_2$; recursively, we go on defining each set $E_j$ by applying Proposition~\ref{finally} if $j$ is odd and Proposition~\ref{finally2} if $j$ is even, in both cases with constant $\eps/2^j$. Since $|E_j\triangle E_{j-1}|\leq \eps 2^{-j}$ for every $j$, there is a limiting set $F$ such that $\Chi{E_j}\to \Chi F$ in $L^1$. To conclude the proof we must check that this set satisfies~(\ref{eq:A.1})--(\ref{eq:A.4}).\par

The fact that $F\cap \Omega=E\cap \Omega$ is true by construction, hence~(\ref{eq:A.1}) follows. The estimate~(\ref{eq:A.2}) for the volume is true by construction and by~(\ref{tredici}) and~(\ref{tredici}').\par

By~(\ref{quattordici}) we have that
\begin{align*}
\H^{N-1}(\Gamma^+_{E_1})=0\,, &&
\H^{N-1}(\Gamma^-_{E_1})\leq \H^{N-1}(\Gamma^-_E) + \frac \eps 2 \H^{N-1}(\Gamma^+_E)\leq P(E)\,.
\end{align*}
Moreover, by~(\ref{quattordici}') we have that
\begin{align*}
\H^{N-1}(\Gamma^-_{E_2})=0\,, &&
\H^{N-1}(\Gamma^+_{E_2})\leq \H^{N-1}(\Gamma^+_{E_1}) + \frac \eps 4\, \H^{N-1}(\Gamma^-_{E_1})
\leq \frac \eps 4\, P(E)\,.
\end{align*}
Recursively, we obtain that for every $j\geq 2$
\[
\H^{N-1}(\partial^* E_j \cap \partial^* \Omega) =
\H^{N-1}\big(\Gamma^+_{E_j} \cup \Gamma^-_{E_j}\big) \leq \frac \eps {2^j}\, P(E)\,.
\]
Now, since by Proposition~\ref{genprop} we know that $\partial^* F = \cap_k \cup_{j\geq k} \partial^* E_j$, then
\[
\H^{N-1}\big(\partial^* F \cap \partial^* \Omega\big) = \lim_{j\to \infty} \H^{N-1}(\partial^* E_j \cap \partial^* \Omega) = 0\,,
\]
thus also~(\ref{eq:A.4}) is proved.\par

The above estimates, together with~(\ref{sedici}) and~(\ref{sedici}') imply in particular that
\[
\sum_j \H^{N-1}(\partial^* E_{j+1} \setminus \partial^* E_j) \leq \sum_j \H^{N-1}(\Gamma^+_{E_j}\cup \Gamma^-_{E_j}) + \frac \eps{2^{j+1}} <+\infty\,.
\]
Hence, since $\Chi{E_j}\to \Chi F$ in $L^1$, it is possible to apply Proposition~\ref{genprop} to the sequence $\{E_j\}$, getting that $P(F)=\lim_j P(E_j)$. Thus, (\ref{eq:A.3}) is given by~(\ref{quindici}) and~(\ref{quindici}').
\end{proof}

\section{Proof of Theorem~\mref{approx1} and Theorem~\mref{approx2} }\label{sec:pfThBeC}

This section is devoted to presenting the proofs of Theorems~\mref{approx1} and~\mref{approx2}. Even though Theorem~\mref{approx1} is a particular case of Theorem~\mref{approx2}, we start with the first one, in order to describe the needed arguments in a simpler setting, and then we pass to the general case. As usual, many of the arguments will be exactly the same, up to straightforward modifications in the notation, while others will require some particular care and different proofs.

The plan of this section is the following. First of all, in Section~\ref{subsectthb} we will show how to obtain Theorem~\mref{approx1} using Theorem~\mref{nobound}; then, in Section~\ref{sec42} we will consider the case of densities. Exactly as Theorem~\mref{nobound} was the main brick to get Theorem~\mref{approx1}, the main brick to obtain Theorem~\mref{approx2} will be the generalisation of Theorem~\mref{nobound} to the case of densities, which is Theorem~\ref{genA}.

\subsection{The proof of Theorem~\mref{approx1}\label{subsectthb}}

Having already the result of Theorem~\mref{nobound}, we can reduce to considering sets $E$ such that $\H^{N-1}\big(\partial^* E\cap\partial^* \Omega\big)=0$, and this will make things much simpler. We start with a standard observation (see for instance~\cite{AFP}, or~\cite{M.book}): let $E$ be a set with finite perimeter and for $\delta>0$ let us define $\varphi_\delta = (\Chi{E} \Chi{B_{1/\delta}})\ast \rho_\delta$, where $\rho_\delta$ is a standard mollifier and $B_{1/\delta}$ is the ball with radius $1/\delta$ centered at the origin. In addition, set $G=\big\{x\in\R^N: \varphi_\delta(x) > t\big\}$. Then, for any given $\eps>0$, we can select a sufficiently small $\delta$ and a suitable $t\in (1/10, 9/10)$ so that $G$ is a smooth bounded set such that
\begin{align}\label{standapp}
\big|G \triangle E\big| < \eps\,, && \big|P(G)-P(E)\big| < \eps\,.
\end{align}
Having the smooth approximation, we can clearly pass to a polyhedral one, that is, a bounded polyhedron $G$ such that the above estimates hold. Therefore, what makes the claim of Theorem~\mref{approx1} non-obvious is the presence of the set $\Omega$. In particular, the problem is not to get~(\ref{eq:B3}), but to get the first inequality in~(\ref{eq:B1}). More precisely, we can immediately notice the following standard fact.

\begin{lemma}
Let $G$ be a smooth or polyhedral set in $\R^N$. Then, there is a family $(0,1] \ni \sigma\mapsto G_\sigma$ of smooth or polyhedral sets such that
\begin{align*}
\big| G_\sigma \triangle G\big| \freccia{\sigma\to 0} 0\,, &&
\big|P(G_\sigma)-P(G)\big|\freccia{\sigma\to 0} 0\,, &&
\H^{N-1}\big(\partial^* G_\sigma \cap \partial^* G_{\sigma'}\big)=0 \quad\forall\, \sigma,\,\sigma'>0\,.
\end{align*}
\end{lemma}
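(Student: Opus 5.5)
The plan is to produce a one-parameter family of disjoint-boundary perturbations of $G$ by dilations in the smooth case, and by parallel translations of the facet hyperplanes in the polyhedral case. The general principle is that an uncountable family of hypersurfaces can have pairwise intersections of positive $\H^{N-1}$-measure only for countably many parameters, since $\H^{N-1}(\partial G)<\infty$ is a finite measure. We will therefore first define a natural continuous family $\sigma\mapsto G_\sigma$ and then reparametrize it, discarding a countable set of bad parameters if necessary.

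In the smooth (bounded) case, fix a point $x_0$ and set $G_\sigma=x_0+(1+\sigma)(G-x_0)$. Then $|G_\sigma\triangle G|\to0$ and $P(G_\sigma)=(1+\sigma)^{N-1}P(G)\to P(G)$. For the third property, note that $\partial G_\sigma\cap\partial G_{\sigma'}$ is contained in $\partial G_\sigma\cap\lambda(\partial G_\sigma)$ with $\lambda=(1+\sigma')/(1+\sigma)\neq1$, after rescaling around $x_0$. It suffices to show that for a smooth compact hypersurface $S$, the set $S\cap\lambda S$ is $\H^{N-1}$-negligible for all but countably many $\lambda$.

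This countability step is the main obstacle, and I would argue it via Fubini. Consider the map $\Phi(y,\lambda)=y/\lambda$ for $y\in S$. The sets $A_\lambda=\{y\in S: y/\lambda\in S\}=S\cap\lambda S$ form a family. If uncountably many $A_\lambda$ had positive measure, then for some $\eta>0$ infinitely many would have measure at least $\eta$ in the finite-measure space $S$, so some point $y\ne x_0$ would lie in infinitely many of them. That would place infinitely many points $y/\lambda$ of the ray through $y$ in $S$, which is not a contradiction by itself.

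Instead I would pass to a better-chosen family: replace dilation by the level-set family $G_\sigma=\{\varphi>t_\sigma\}$ when $G$ is itself obtained as a superlevel set of a smooth $\varphi$ as in \eqref{standapp}. Distinct levels have disjoint boundaries, $\partial G_\sigma\subseteq\{\varphi=t_\sigma\}$. By Sard's theorem almost every $t$ is regular, and by the coarea formula $P(\{\varphi>t\})$ is continuous at almost every $t$. Choosing regular values $t_\sigma\to t$ at which continuity holds gives all three properties with boundaries that are actually disjoint. For a general smooth bounded $G$, take $\varphi$ to be a signed distance to $\partial G$ (smooth in a tubular neighbourhood), so that $G_\sigma=\{\mathrm{d}<\sigma\}$ is a smooth parallel set with $\partial G_\sigma=\{\mathrm{d}=\sigma\}$. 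These boundaries are pairwise disjoint, and $P(G_\sigma)\to P(G)$ by smooth dependence of the parallel surfaces.

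In the polyhedral case, write $G$ as a finite union of finite intersections of half-spaces $\{x\cdot\nu_i<c_i\}$, and set $G_\sigma$ to be the same Boolean combination with $c_i$ replaced by $c_i+\sigma$. Then $\partial G_\sigma\subseteq\bigcup_i\{x\cdot\nu_i=c_i+\sigma\}$. Two such boundaries for $\sigma\ne\sigma'$ can share positive measure only inside hyperplanes $\{x\cdot\nu_i=c_i+\sigma\}=\{x\cdot\nu_j=c_j+\sigma'\}$. This requires $\nu_j=\pm\nu_i$ and $c_j-c_i$ to take one of finitely many values, so the bad pairs satisfy $\sigma-\sigma'\in D$ for a finite set $D$ of nonzero differences. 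Choosing $\sigma$ in $(0,1]$ scaled so that all differences are smaller than $\min|D|$, for instance $\sigma\mapsto\sigma\cdot\tfrac12\min|D|$, rules out every coincidence. Volume and perimeter converge because the facets vary continuously with $\sigma$.
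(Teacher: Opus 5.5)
The paper gives no proof of this lemma; it is stated as a standard fact, so there is no argument to compare against. Your final constructions are a correct proof for \emph{bounded} $G$: outer parallel sets $\{d<\sigma\}$ of the signed distance in the smooth case, and outward shifts $c_i\mapsto c_i+\sigma$ of all defining half-spaces in the polyhedral case. Several parts of the write-up need fixing, though.

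The ``general principle'' in your first paragraph is false, and the dilation family is not merely unproved but wrong. For $G=(0,1)^N$ dilated about the vertex $x_0=0$, every $\partial^*G_\sigma$ contains $\{x_1=0\}\times(0,1)^{N-1}$. The same happens whenever $\partial G$ contains a piece of a cone with vertex $x_0$. The countability fact that is true, and the one used in Corollary~\ref{almostall}, goes the other way: once the sets $\partial^*G_\sigma$ pairwise intersect in $\H^{N-1}$-null sets, a fixed set of finite $\H^{N-1}$-measure charges only countably many of them. So drop the dilation paragraphs and the ``discard countably many parameters'' plan.

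In the level-set remark, continuity of $t\mapsto P(\{\varphi>t\})$ at $t$ does not come from the coarea formula, which only gives local integrability. It holds because $t$ is a regular value of a compactly supported $\varphi$, so nearby levels are regular and are $C^1$-close diffeomorphic copies of $\{\varphi=t\}$.

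In the polyhedral case the coincidence condition is misstated for $\nu_j=-\nu_i$. There the hyperplanes coincide iff $\sigma+\sigma'=-(c_i+c_j)$, which is a condition on the sum that your set $D$ of differences does not control. The clean fix is monotonicity:
\begin{itemize}
\item Only unions and intersections occur and every half-space grows with $\sigma$, so $G_\sigma\subseteq G_{\sigma'}$ for $\sigma<\sigma'$.
\item Hence the outer normals agree at $\H^{N-1}$-a.e.\ common reduced-boundary point.
\item A generic $z\in\partial^*G_\sigma$ lies on some $\{x\cdot\nu_i=c_i+\sigma\}$ with $\nu_i=\nu_{G_\sigma}(z)$.
\item So only equally oriented pairs matter, and your difference condition is then right.
\end{itemize}
Alternatively, exclude the finitely many nonzero sums as well by shrinking the parameter range.

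Two further points in the polyhedral case. Take the pieces to be nonempty bounded polytopes, e.g.\ the open simplices of a triangulation; an empty piece such as $\{x_1<0\}\cap\{-x_1<0\}$ becomes an unbounded slab after shifting. Also justify $P(G_\sigma)\to P(G)$. A facet shared by two simplices is swallowed by the enlarged neighbour except in an $O(\sigma)$-strip along its relative boundary. This gives $\limsup_{\sigma\to0}P(G_\sigma)\le P(G)$, and lower semicontinuity gives the reverse inequality.

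Finally, constant-width parallel sets need a uniform tubular neighbourhood, which an unbounded smooth set of finite perimeter need not have. This is harmless for Section~\ref{subsectthb}. However, the proof of Theorem~\mref{approx2} reuses the construction for smooth unbounded $G$. There you would need a normal push of variable size $\sigma\rho(y)$, with $\rho>0$ below the local normal injectivity radius and with $\rho|\kappa|$ and $|\nabla\rho|$ bounded, so that dominated convergence gives the perimeter convergence.
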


An obvious consequence of the above fact is a short proof of Theorem~\mref{approx1} without requiring that $P(F;\Omega)$ is close to $P(E;\Omega)$.

\begin{corollary}\label{almostall}
The claim of Theorem~\mref{approx1} with~(\ref{eq:B1}) replaced by the sole estimate $|P(F)-P(E)|<\eps$ holds.
\end{corollary}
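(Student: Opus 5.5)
We obtain Corollary~\ref{almostall} by combining the standard approximation~(\ref{standapp}) with the family $G_\sigma$ from the preceding lemma, and then choosing $\sigma$ outside a countable exceptional set. Theorem~\mref{nobound} is not needed here, since~(\ref{eq:B1}) is not required in full. Also, the hypothesis $\H^{N-1}(\partial\Omega\setminus\partial^*\Omega)=0$ plays no role: we only need that $\partial^*\Omega$ has $\sigma$-finite (indeed finite) $\H^{N-1}$-measure.

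Fix $\eps>0$. First, by~(\ref{standapp}) (and its polyhedral version), choose a smooth or polyhedral bounded set $G$ with $|G\triangle E|<\eps/2$ and $|P(G)-P(E)|<\eps/2$. Next, apply the lemma to $G$ to obtain the family $\{G_\sigma\}_{\sigma\in(0,1]}$. There is some $\bar\sigma$ such that for all $\sigma<\bar\sigma$ we have $|G_\sigma\triangle G|<\eps/2$ and $|P(G_\sigma)-P(G)|<\eps/2$. For every such $\sigma$, the set $F=G_\sigma$ then satisfies~(\ref{eq:B2}) and $|P(F)-P(E)|<\eps$. Note that $G_\sigma$ stays bounded for small $\sigma$; if the family were built by dilations or small perturbations of $G$, this is automatic.

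It remains to select $\sigma\in(0,\bar\sigma)$ so that~(\ref{eq:B3}) holds. Consider the measure $\mu=\H^{N-1}\llcorner\partial^*\Omega$, which is finite because $P(\Omega)<\infty$. The sets $\partial^*G_\sigma$, $\sigma\in(0,\bar\sigma)$, are pairwise $\H^{N-1}$-almost disjoint, so the sets $\partial^*G_\sigma\cap\partial^*\Omega$ are pairwise $\mu$-almost disjoint. For each $k$, at most finitely many $\sigma$ can have $\mu(\partial^*G_\sigma)>1/k$. Indeed, for any finite collection of such $\sigma$ the almost disjointness gives $\sum\mu(\partial^*G_\sigma)\le\mu(\R^N)=P(\Omega)$. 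Hence only countably many $\sigma$ have $\mu(\partial^*G_\sigma)>0$. Picking any other $\sigma\in(0,\bar\sigma)$ gives $\H^{N-1}(\partial^*F\cap\partial^*\Omega)=0$, which is~(\ref{eq:B3}).

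The only delicate point is the countability argument. It relies on the fact that pairwise $\H^{N-1}$-null intersections allow us to sum measures under the finite measure $\mu$, which is a direct finite-additivity argument. Everything else is immediate from the earlier results.
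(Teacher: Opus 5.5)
Your proposal is correct and follows the paper's proof: take $G$ from~(\ref{standapp}), apply the lemma to get the family $G_\sigma$, and choose a small $\sigma$ outside the countable set of parameters for which $\partial^*G_\sigma$ charges $\partial^*\Omega$. Your counting argument via finite additivity of the finite measure $\H^{N-1}$ restricted to $\partial^*\Omega$ just spells out the step the paper states in one line, and your remarks that neither Theorem~\mref{nobound} nor the hypothesis on $\partial\Omega$ is needed here agree with the paper.
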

\begin{proof}
Let $E$ be given, and let $G$ be a smooth or polyhedral set such that~(\ref{standapp}) holds. Let then $\sigma\mapsto G_\sigma$ be a family as in the previous lemma. Since $\H^{N-1}\big(\partial^* G_\sigma \cap \partial^* G_{\sigma'}\big)=0$ for every $\sigma,\,\sigma'$ and since $\Omega$ has finite perimeter, we have that
\[
\H^{N-1}\big(\partial^* G_\sigma\cap \partial^* \Omega\big)=0
\]
for all but countably many values of $\sigma$. We can then define $F=G_\sigma$ for a sufficiently small $\sigma$ such that the above equality holds, and we have the validity of~(\ref{eq:B3}), (\ref{eq:B2}), and $|P(F)-P(E)|<\eps$.
\end{proof}

Thus, the above construction already provides an approximation which complies with almost all the requests of Theorem~\mref{approx1}; however, it gives no insight about $P(F;\Omega)$. Nevertheless, we can estimate this quantity as follows. Notice that we are still not assuming that the reduced boundary $\partial^*\Omega$ coincides $\H^{N-1}$-a.e. with the topological boundary $\partial \Omega$, but the latter one is used in the estimate~(\ref{limsupineq}) below.

\begin{lemma}\label{almapp1}
Let $\Omega$ and $E$ be two sets of finite perimeter. Then, there exists a sequence of smooth or polyhedral sets $\{E_j\}$ such that
\begin{align}\label{yeth}
P(E_j)\to P(E)\,, && |E_j\triangle E|\to 0\,, && \H^{N-1}(\partial^* E_j\cap\partial^* \Omega)=0\quad \forall\, j\in\N\,,
\end{align}
and with the property that
\begin{equation}\label{limsupineq}
\limsup_{j\to\infty} \big|P(E_j;\Omega) - P(E;\Omega)\big| \leq  \H^{N-1}\big(\partial^* E \cap \partial \Omega\big)\,.
\end{equation}
\end{lemma}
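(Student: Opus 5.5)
We take $\{E_j\}$ exactly as in Corollary~\ref{almostall}: for each $j$ we choose a smooth or polyhedral $G$ satisfying~(\ref{standapp}) with $\eps=1/j$, and then a small perturbation $G_\sigma$ whose boundary has negligible intersection with $\partial^*\Omega$. This gives~(\ref{yeth}) directly. It remains to prove~(\ref{limsupineq}). The idea is to split $\R^N$ into the open set ${\rm int}\,\Omega$, the open set $U=\R^N\setminus\overline\Omega$, and the closed set $\partial\Omega$. On the two open sets we use lower semicontinuity of the perimeter, and the total perimeter converges.

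First, since $\Chi{E_j}\to\Chi E$ in $L^1$, lower semicontinuity on open sets gives
\[
\liminf_j P(E_j;{\rm int}\,\Omega)\ge P(E;{\rm int}\,\Omega), \qquad \liminf_j P(E_j;U)\ge P(E;U).
\]
Second, we bound $P(E_j;\Omega)$ from above. Since $\R^N$ is the disjoint union of ${\rm int}\,\Omega$, $U$ and $\partial\Omega$, and $\Omega\subseteq{\rm int}\,\Omega\cup\partial\Omega$, we have
\[
P(E_j;\Omega)\le P(E_j)-P(E_j;U).
\]
Taking the $\limsup$ and using $P(E_j)\to P(E)$ yields
\[
\limsup_j P(E_j;\Omega)\le P(E)-P(E;U)=P(E;{\rm int}\,\Omega)+P(E;\partial\Omega)\le P(E;\Omega)+\H^{N-1}(\partial^*E\cap\partial\Omega).
\]
Third, we bound $P(E_j;\Omega)$ from below. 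Since ${\rm int}\,\Omega\subseteq\Omega$,
\[
\liminf_j P(E_j;\Omega)\ge \liminf_j P(E_j;{\rm int}\,\Omega)\ge P(E;{\rm int}\,\Omega)\ge P(E;\Omega)-\H^{N-1}(\partial^*E\cap\Omega\cap\partial\Omega).
\]
The last quantity is at least $P(E;\Omega)-\H^{N-1}(\partial^* E\cap\partial\Omega)$. Combining the two bounds gives~(\ref{limsupineq}).

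The only delicate point is measurability and notation. Here $P(E;B)=\H^{N-1}(\partial^*E\cap B)$ for arbitrary $B$, and $\Omega$ itself need not be open or Borel-regular in a useful way. We therefore take care to use lower semicontinuity only on the genuinely open sets ${\rm int}\,\Omega$ and $U$, which is the standard statement $|D\Chi E|(A)\le\liminf|D\Chi{E_j}|(A)$ for open $A$. All other steps are set-inclusion bookkeeping. Since $\Omega$ is only defined up to null sets, we fix a representative and treat $\partial\Omega$ and ${\rm int}\,\Omega$ as determined by that representative.
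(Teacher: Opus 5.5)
Your proof is correct, and it takes a more direct route than the paper. The paper uses a collar $\Gamma_\eta=\{{\rm dist}(\cdot,\partial\Omega)<\eta\}$ together with the measure-theoretic sets $\Omega^{(1)}\setminus\overline{\Gamma_\eta}$ and $\Omega^{(0)}\setminus\overline{\Gamma_\eta}$. This forces it to:
\begin{enumerate}
\item prove that these two sets are open;
\item use, implicitly, that they lie inside $\Omega$ and outside $\Omega$ respectively;
\item show that together with $\Gamma_{2\eta}$ they cover $\R^N$;
\item choose $\eta$ so that $\H^{N-1}(\partial^*E\cap\Gamma_{2\eta})$ exceeds $\H^{N-1}(\partial^*E\cap\partial\Omega)$ by less than $\eps$, and carry these $\eps$-errors to the end.
\end{enumerate}

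You instead use the topological splitting $\R^N={\rm int}\,\Omega\sqcup\partial\Omega\sqcup(\R^N\setminus\overline\Omega)$ and apply lower semicontinuity on its two open pieces. Since ${\rm int}\,\Omega\subseteq\Omega\subseteq\overline\Omega$, the inclusions you need are immediate. No collar or $\eps$ is required, and the bound comes out directly.

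The paper's collar is in fact a detour. Its two open sets are contained in $\Omega$ and disjoint from $\Omega$ respectively, so they sit inside ${\rm int}\,\Omega$ and $\R^N\setminus\overline\Omega$, which are your open sets.

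The remaining ingredients are the same in both arguments: lower semicontinuity on open sets, plus $P(E_j)\to P(E)$. Your measurability remark is also the right one. You only use two facts: the openness of $\R^N\setminus\overline\Omega$, which gives the additivity behind $P(E_j;\Omega)\le P(E_j)-P(E_j;\R^N\setminus\overline\Omega)$, and the subadditivity of $\H^{N-1}$ in the lower bound. So $\Omega$ never needs to be $\H^{N-1}$-measurable.
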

\begin{proof}
The existence of a sequence $\{E_j\}$ satisfying~(\ref{yeth}) has been already established in Corollary~\ref{almostall}. We show now that any such sequence also satisfies~(\ref{limsupineq}).\par

For any $\eta>0$, let us call $\Gamma_\eta = \big\{x\in\R^N: {\rm dist}(x,\partial\Omega)<\eta\big\}$, which is an open set. Since $\partial\Omega= \cap_{\eta>0} \Gamma_\eta$ because the topological boundary $\partial\Omega$ is closed, we can select a sufficiently small $\eta$ such that
\[
\H^{N-1}\big(\partial^* E \cap \Gamma_{2\eta}\big) < \H^{N-1}\big(\partial^* E \cap \partial \Omega\big)+\eps\,.
\]
We notice now that, writing as usual $\Omega^{(1)}$ to denote the points of density $1$ of $\Omega$, the set $U=\Omega^{(1)}\setminus \overline{\Gamma_\eta}$ is open. Indeed, calling $A$ the interior of $\Omega^{(1)}$, it is enough to observe that $\Omega^{(1)}\setminus \overline{\Gamma_\eta}=A\setminus\overline{\Gamma_\eta}$, since the latter set is surely open. And in turn, if $x \in \Omega^{(1)}\setminus A$, then there exists some $y\in \partial^*\Omega\subseteq\partial\Omega$ such that $|y-x|<\eta$, and then $x\in \Gamma_\eta$ and thus $x\notin\Omega^{(1)}\setminus \overline{\Gamma_\eta}$. Analogously, the set $V = \Omega^{(0)}\setminus\overline{\Gamma_\eta}$ is open. Since $\R^N\setminus (\Omega^{(0)}\cup\Omega^{(1)})\subseteq\partial\Omega$, we deduce that $\R^N$ is the union of the three open sets $U,\, V$ and $\Gamma_{2\eta}$.\par

Since $E_j$ converges in the $L^1$ sense to $E$, the lower semicontinuity of the perimeter together with the fact that $U$ and $V$ are open implies that $P(E;U)\leq \liminf_j P(E_j; U)$ and $P(E;V)\leq \liminf_j P(E_j;V)$, and then there exists a sufficiently large $j$ such that
\begin{align*}
P(E_j; U) \geq P(E; U) -\eps\,, &&
P(E_j; V) \geq P(E; V) -\eps\,.
\end{align*}
By the first convergence in~(\ref{yeth}), up to increasing the value of $j$ we have also
\[
P(E_j)\leq P(E)+\eps\,.
\]
Putting together the above estimates, we have then
\[\begin{split}
P(E_j;\Omega) &\leq P(E_j) - P(E_j; V)
\leq P(E) - P(E; V) + 2\eps
\leq P(E; U ) + P(E; \Gamma_{2\eta}) + 2\eps\\
&\leq P(E; \Omega ) + \H^{N-1}\big(\partial^* E \cap \partial \Omega\big) + 3\eps\,.
\end{split}\]
Similarly,
\[\begin{split}
P(E_j;\Omega) &\geq P(E_j;U)
\geq P(E;U) - \eps
\geq P(E) - P(E;V) - P(E; \Gamma_{2\eta})-\eps\\
&\geq P(E) - P(E;V) - \H^{N-1}\big(\partial^* E \cap \partial \Omega\big)-2\eps
\geq P(E;\Omega) - \H^{N-1}\big(\partial^* E \cap \partial \Omega\big)-2\eps\,.
\end{split}\]
The last two estimates give~(\ref{limsupineq}) and the proof is complete.
\end{proof}

We are ready to show Theorem~\mref{approx1}.
\proofof{Theorem~\mref{approx1}}
Let us take $E$ and $\Omega$ as in the statement, and let us fix $\eps>0$. We apply Theorem~\mref{nobound} to the set $E$, getting a set $G$ which satisfies~(\ref{eq:A.1})--(\ref{eq:A.4}) with $G$ in place of $F$. Then, we apply Lemma~\ref{almapp1} to $G$, getting a sequence $\{G_j\}$ satisfying~(\ref{yeth}) and~(\ref{limsupineq}) with $G$ in place of $E$. We can then define $F=G_j$ for a suitable $j$. The last property of~(\ref{yeth}) ensures that~(\ref{eq:B3}) holds whatever $j$ is, while~(\ref{eq:B2}) and the second property of~(\ref{eq:B1}) follow by~(\ref{eq:A.2}), (\ref{eq:A.3}) and the first two properties of~(\ref{yeth}). The first property of~(\ref{eq:B1}) then follows, since~(\ref{limsupineq}) together with~(\ref{eq:A.4}) ensures that $P(G_j;\Omega)\to P(G;\Omega)$, and in turn $P(G;\Omega)=P(E;\Omega)$ by~(\ref{eq:A.1}).
\end{proof}

\begin{remark}\label{laterinterest}
When we defined $\varphi_\delta= (\Chi{E} \Chi{B_{1/\delta}})\ast \rho_\delta$, we have multiplied by $\Chi{B_{1/\delta}}$ only to have a smooth function $\varphi_\delta$ with compact support; this allows us to obtain a set $G$ satisfying~(\ref{standapp}) which is smooth \emph{and bounded}. If, instead, we define $\varphi_\delta=\Chi{E}\ast \rho_\delta$, then everything works exactly in the same way with the only difference that $G$ is smooth but \emph{not necessarily bounded}.
\end{remark}

\subsection{The proof of Theorem~\mref{approx2}\label{sec42}}

Let us now consider the situation when volume and perimeter are no more the standard ones, but the ones given by~(\ref{genvolper}) for two densities $f$ and $g$. The obvious strategy would be to repeat the whole construction in this more general setting; and in fact, this is more or less what we are going to do, but not all the arguments will work in the same way. Most of the effort of this section will be needed to get the generalisation of Theorem~\mref{nobound}, which is Theorem~\ref{genA} below. The construction will follow exactly the same path as in Section~\ref{sec:ThA}, but the notation will be a bit more complicated and the constants will be often slightly different.

Once Theorem~\ref{genA} is at our disposal, we will work on Theorem~\mref{approx2}; the construction to deduce this result from Theorem~\ref{genA} will be similar to the one we used to get Theorem~\mref{approx1} from Theorem~\mref{nobound}, but some modifications will be required. In particular, it is important to notice one thing. Namely, the claim of Theorem~\ref{genA} is exactly the same as that of Theorem~\mref{nobound}, except for the fact that volumes and perimeters are replaced by $f$-volumes and $g$-perimeters; in particular, observe that in Theorem~\mref{nobound} the assumption that $E$ had finite perimeter automatically implied that $E$ (or $\R^N\setminus E$) had finite volume, while this time the assumption that $E$ has finite $g$-perimeter does not imply anything about the Euclidean volume or the $f$-volume of $E$. Instead, the claim of Theorem~\mref{approx2} is not exactly the same as in Theorem~\mref{approx1}, up to the change of notation; indeed, some extra assumption will be needed in order to generalise some parts of the construction. In the following Section~\ref{sec:counterex} we will show through some counterexamples that our extra assumptions are actually needed.

\begin{theorem}[Removing the common boundary in the density case]\label{genA}
Let $f:\R^N\to(0,+\infty)$ be $L^1_{\rm loc}(\R^N)$ and locally bounded, and $g:\R^N\times\S^{N-1}\to (0,+\infty)$ continuous in the first variable, and convex in the second one in the sense of Definition~\ref{1homcon}. Let $\Omega$ and $E$ be two sets of locally finite $g$-perimeter in $\R^N$. Then, for every $\eps>0$ there exists a set $F\subseteq\R^N$ such that
\begin{gather}\label{neweq:A.1}
F\cap \Omega= E\cap \Omega\,,\qquad\partial^* F \cap \Omega=\partial^* E \cap \Omega\,,
\\\label{neweq:A.2}
 |F\triangle E|_f < \eps\,,\\
\label{neweq:A.3}
|P_g(F) - P_g(E)|< \eps\,,\\
\label{neweq:A.4}
\H^{N-1}(\partial^* F \cap \partial^* \Omega)=0\,.
\end{gather}
\end{theorem}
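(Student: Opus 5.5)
We will follow the construction of Section~\ref{sec:ThA} step by step, localizing every estimate so that the densities can be treated as almost constant on each cylinder. First we reduce to the case in which $\Omega$ and $E$ have finite $g$-perimeter and, by the standard exhaustion argument, we work inside a large ball $B_R$: outside $B_R$ we leave $E$ unchanged. The tail $\H^{N-1}(\partial^*E\cap\partial^*\Omega\setminus B_R)$ is handled through a sequence of annuli, each treated with tolerance $\eps 2^{-k}$. On $B_{R+1}$ we set $M_f=\sup_{B_{R+1}} f<\infty$, which is possible since $f$ is locally bounded. Moreover, since $g$ is continuous in $x$ and convex (hence continuous) in $\nu$, and positive, we have $0<m\le g\le M$ on $\overline{B_{R+1}}\times\S^{N-1}$. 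The $f$-volume estimate~(\ref{neweq:A.2}) is then immediate from the Euclidean one, $|F\triangle E|_f\le M_f|F\triangle E|$, by running the Euclidean construction with tolerance $\eps/M_f$.

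The real work is~(\ref{neweq:A.3}), since the perimeter balance in Lemma~\ref{lemma1} is not preserved under multiplication by a non-constant weight. The plan is to strengthen Lemma~\ref{lemma0}: at a density point $x$ of $\Gamma^+_E$ with $\nu=\nu_E(x)$, we additionally require $r$ to be so small that $|g(z,\mu)-g(x,\mu)|\le\delta$ for all $z\in C(x,2r)$ and $\mu\in\S^{N-1}$. This uses continuity in $x$, which is uniform in $\mu$ on compacts by Dini/equicontinuity of convex functions with bounded values. We also require that most of $\partial^*E\cap C(x,(1+\delta)r)$ and of $\partial^*\Omega\cap C(x,(1+\delta)r)$ has normal within $\delta$ of $\pm\nu$, which holds by the De Giorgi blow-up, since $|D\chi_E|(C)\approx|D\chi_E\cdot\nu|(C)$. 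With this, the part removed in~(\ref{sette}) has $g$-measure at least $(1-C\delta)g(x,\nu)\omega_{N-1}r^{N-1}$. The new boundary in~(\ref{otto}) consists of the upper base, with normal exactly $\nu$, giving $g$-mass at most $(g(x,\nu)+\delta)\omega_{N-1}r^{N-1}$, plus lateral and lower pieces and pieces of $\partial^*\Omega$, all of Euclidean measure $O(N\delta r^{N-1})$ and hence of $g$-measure at most $MO(N\delta r^{N-1})$. The analogues of~(\ref{ciaf}) then become $|P_g(E_j)-P_g(E_{j-1})|\le C(N,M,m)\delta\,\omega_{N-1}r_j^{N-1}$, and Lemma~\ref{lm2} still bounds $\sum r_j^{N-1}$ by $2\H^{N-1}(\Gamma^+_E)/\omega_{N-1}\le 2P_g(E)/(m\,\omega_{N-1})$.

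Proposition~\ref{genprop} is purely Euclidean and gives $\partial^*F=\cap_k\cup_{j\ge k}\partial^*F_j$. Since the normals also converge $\H^{N-1}$-a.e. (on $\partial^*F\cap\partial^*F_j$ they coincide), dominated convergence with $g\le M$ yields $P_g(F_j)\to P_g(F)$. Lemma~\ref{34}, Corollary~\ref{corlapi}, Propositions~\ref{finally} and~\ref{finally2}, and the alternating recursion in the proof of Theorem~\mref{nobound} then go through verbatim, with $\H^{N-1}$ replaced by the $g$-measure in~(\ref{quindici}) and~(\ref{sedici}). The geometric statements~(\ref{neweq:A.1}) and~(\ref{neweq:A.4}) do not involve $f$ or $g$ and follow exactly as before. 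Convexity of $g$ is not really needed at this stage, except to make $g$ continuous in $\nu$, which gives the uniform bounds.

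The main obstacle is the localized $g$-version of~(\ref{sette}) and~(\ref{otto}). We must make sure that the normals of $\Gamma^+_E$ inside the thin cylinder are close to $\nu$ in measure, so that the removed $g$-mass nearly matches the $g(x,\nu)$-mass of the added flat base. We must also make sure that this extra smallness requirement survives the limit argument in the proof of~(\ref{no+}), where the condition has to pass from $E_\infty$ to $E_j$ with the same radius. For the latter we will use the fact that the $E_j$ have boundaries converging to $\partial^*E_\infty$ in the $\H^{N-1}$-sense, by Proposition~\ref{genprop}, so the normal-closeness condition, imposed with $\delta/2$ for $E_\infty$, transfers to $E_j$ for $j$ large, exactly as~(\ref{treemezzo'}) did.
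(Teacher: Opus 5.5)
Your plan is correct and runs the same construction as the paper. You rerun Section~\ref{sec:ThA}, freeze $g$ near $g(x,\nu)$ on each small cylinder by continuity, use that the new top face has normal exactly $\nu$, and pay a sup bound on $g$ only on the thin lateral and bottom faces and on the new pieces of $\partial^*\Omega$. The difference is in how you handle the non-uniformity of $f$ and $g$.

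You keep the Euclidean cylinders, so every Euclidean estimate of Section~\ref{sec:ThA} survives verbatim, and you absorb the ratio $M/m$ of two-sided bounds into $\delta$. The price is that the unweighted choice of $x_j$ and Lemma~\ref{lm2} need $\H^{N-1}(\Gamma^+_E)<\infty$ and $g\geq m>0$, so you must localize even when $P_g(E)<\infty$. Indeed, if $g$ decays at infinity the Euclidean perimeter may be infinite. Then the unweighted rule may keep selecting far-away cylinders of radius bounded below, and the contradiction $r_j\to 0$ in the proof of~(\ref{no+}) is lost.

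The paper instead uses a pointwise local bound $M$, heights in $(\delta r g(x,\nu)/M,2\delta r g(x,\nu)/M)$, and chooses $x_j$ almost maximizing $r^{N-1}g(x,\nu(x))$. This gives $\sum_j r_j^{N-1}g(x_j,\nu(x_j))\leq\frac 2{\omega_{N-1}}\H^{N-1}_g(\Gamma^+_E)$, so all estimates scale with the $g$-perimeter and no decomposition is needed.

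If you keep your route, make the annuli genuinely independent. Pick radii $R_k$ with $\H^{N-1}(\partial^*E\cap\partial B_{R_k})=0$, and only use cylinders contained in the open annulus of their centre; this is allowed because only positivity of the admissible radius matters in Lemma~\ref{lemma0} and in the proof of~(\ref{no+}). Otherwise processing one annulus recreates common boundary in its neighbours, and you would need a further global iteration.

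In exchange, your version spells out points the paper leaves implicit:
\begin{itemize}
\item joint continuity of $g$, which follows from convexity;
\item closeness of normals in measure;
\item the fact that $\nu_{\widetilde E}=-\nu_\Omega$ on new pieces of $\partial^*\Omega$, so only the crude bound applies there;
\item the transfer of the extra conditions from $E_\infty$ to $E_j$.
\end{itemize}

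One small correction. Normals of $\partial^*F$ and $\partial^*F_j$ coincide on the intersection for monotone sequences, but not in general. In the alternating outer recursion, use that consecutive sets are nested, so $\nu_{E_j}$ is eventually constant at a.e.\ point of $\partial^*F$. Strict convergence, namely $D\chi_{E_j}\rightharpoonup D\chi_F$ together with $\H^{N-1}(\partial^*E_j\triangle\partial^*F)\to 0$, then forces that constant to be $\nu_F$.
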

\begin{proof}
We will proceed exactly as we did in Section~\ref{sec:ThA}, but we need to carefully underline all the differences. Let us start with Lemma~\ref{lemma0}: let again $x$ be a point of $(N-1)$-density $1$ for $\Gamma_E^+$, call $\nu=\nu_E(x)$, and fix a small $\delta$. Since $f$ is locally bounded and $g$ is continuous, we fix a constant $M\geq 1$ such that, for each $y$ in a small neighborhood of $x$ and each $\alpha\in\S^{N-1}$, both $f(y)$ and $g(x,\alpha)$ are smaller than $M$. We want to obtain $\bar r$ such that the following extension of~(\ref{uno})--(\ref{quattro}) holds:
\begin{gather*}
|C^+(x,r)\cap (E\cup \Omega)| \leq \delta^2 \,\frac{\omega_{N-1} g(x,\nu)^2 }{M^2}\, r^N \,, \\
|C^-(x,r)\setminus(E\cap \Omega)| \leq \delta^2\, \frac{\omega_{N-1} g(x,\nu)^2 }{M^2}\, r^N\,,\\
\H_g^{N-1}\big(\Gamma^+_E\cap C_{\delta r/M}(x,r)\big) \geq (1-\delta)g(x,\nu)\omega_{N-1}r^{N-1}\,,\\
\H_g^{N-1}\big(\partial^* E \cap C(x,(1+\delta)r)\big) \leq (1+N\delta)g(x,\nu)\omega_{N-1}r^{N-1}\,,\\
\H_g^{N-1}\big(\partial^*\Omega\cap C(x,(1+\delta)r)\big) \leq (1+N\delta)g(x,\nu) \omega_{N-1}r^{N-1}\,.
\end{gather*}
In the above estimates, by $\H^{N-1}_g$ we mean the measure $\H^{N-1}$ with weight $g$: that is, for any set $G$ of finite perimeter and any Borel set $D$ we write
\[
\H^{N-1}_g(\partial^* G\cap D) = \int_{\partial^* G\cap D} g(z,\nu_G(z))\,d\H^{N-1}(z)\,.
\]
The first two estimates can be obtained exactly as we did for~(\ref{uno}) and~(\ref{due}); indeed, the fact that the constant $\delta^2$ has been replaced by $\delta^2 g(x,\nu)^2/M$ makes no difference---we could put there any constant that we want, our choice is the correct one for later use. Concerning~(\ref{tre}), (\ref{treemezzo}) and~(\ref{quattro}), instead, it is immediate to notice that the above estimates are the ``correct'' generalisations, and the proof is exactly the same. In fact, since $x$ is a density point for $\partial^* E$ (and the same holds for $\partial^*\Omega$), in a small cylinder of radius $r$ around $x$ the $\H^{N-1}$ measure of $\partial^* E$ is very close to $\omega_{N-1} r^{N-1}$, and the vast majority of the points have normal vector almost equal to $\nu$. Summarizing, the above generalisation of Lemma~\ref{lemma0} holds. In particular, since $g$ is continuous in the first variable, up to further decreasing $\bar r$ if necessary, we can assume that for every $y\in C(x,\bar r)$ one has
\begin{equation}\label{smalldev}
(1-\delta) g(x,\nu)\leq g(y,\nu)\leq (1+\delta) g(x,\nu)\,.
\end{equation}

Let us now pass to consider Lemma~\ref{lemma1}. Also in this case, we can prove the analogues of~(\ref{sei})--(\ref{nove+}), substituting everywhere $\H^{N-1}$ with $\H^{N-1}_g$, replacing the volume estimate $| \widetilde E \setminus E| \leq 4 \delta \omega_{N-1} r^N$ in~(\ref{sei}) with
\[
\big| \widetilde E \setminus E\big|_f \leq 4 \delta \omega_{N-1} r^N g(x,\nu)\,,
\]
and replacing all the terms $\omega_{N-1} r^{N-1}$ in~(\ref{sette})--(\ref{nove+}) with $\omega_{N-1} r^{N-1} g(x,\nu)$; in addition, the term $5N\delta$ (resp., $-3\delta$) in~(\ref{otto}) (resp., in~(\ref{nove+})) becomes $(5N+1)\delta$ (resp., $-4\delta$), and this time the height $h$ belongs to $(\delta r g(x,\nu)/M, 2\delta r g(x,\nu)/M)$.\par

The proof of these estimates is almost exactly the same as in Lemma~\ref{lemma1}, up to estimating $f$ and $g$ from above with $M$ when needed, and up to replacing the volume and perimeter with the $f$-volume and the $g$-perimeter. More precisely, the starting estimate~(\ref{cinque}) has to be rewritten this time as
\begin{equation}\label{newcinque}
\H^{N-1} (A_h\cup A_{-h})\leq 3\delta\,\frac{ \omega_{N-1}g(x,\nu)}M r^{N-1}\leq 3\delta \omega_{N-1}g(x,\nu) r^{N-1} \,,
\end{equation}
where the second inequality is obvious since $M\geq 1$. Then, the existence of a suitable height $h\in (\delta r g(x,\nu)/M, 2\delta r g(x,\nu)/M)$ satisfying this estimate is the obvious extension of what we did in Lemma~\ref{lemma1}. The validity of the extension of~(\ref{sei}) is again immediate, keeping in mind that $f\leq M$. The validity of the extensions of~(\ref{sette}), (\ref{ottoemezzo}) and~(\ref{nove}) follows with the very same arguments as in Lemma~\ref{lemma1}, thanks to the form of the extensions of~(\ref{tre}), (\ref{treemezzo}) and~(\ref{quattro}).

The argument to get~(\ref{otto}) needs just a little more care; since this time $h\leq 2\delta r g(x,\nu)/M$, then the estimate $\H^{N-1}(Q_{\rm lat})\leq 4\delta(N-1)\omega_{N-1} r^{N-1}$ reads this time as
\[
\H^{N-1}_g(Q_{\rm lat}) \leq M \H^{N-1}(Q_{\rm lat}) \leq 4\delta(N-1)\omega_{N-1} r^{N-1} g(x,\nu)\,,
\]
and in the same way the estimate for $\H^{N-1}_g (Q_{\rm down})$ works since~(\ref{cinque}) has now been substituted by~(\ref{newcinque}). Concerning the estimate for $Q_{\rm up}$, of course one cannot obtain anything better than $\H^{N-1}(Q_{\rm up})\leq \omega_{N-1} r^{N-1}$, which using the inequality $g\leq M$ would give the estimate $\H^{N-1}_g (Q_{\rm up})\leq \omega_{N-1} r^{N-1} M$, which is too weak. Nevertheless, we can observe that for $\H^{N-1}$-a.e. point of $\partial^* \widetilde E\cap Q_{\rm up}$ the normal vector is exactly $\nu$. As a consequence, by~(\ref{smalldev}) we deduce
\[
\H^{N-1}_g (Q_{\rm up})\leq \omega_{N-1} r^{N-1} g(x,\nu) (1+\delta)\,.
\]
We obtain thus the extension of~(\ref{otto}) by repeating the original argument, and due to the above term $(1+\delta)$ the term $5N\delta$ becomes $(5N+1)\delta$. Finally, the argument to get the new version of~(\ref{nove+}) is again the obvious extension of the original argument, again keeping in mind that the normal vector on $Q_{\rm up}$ coincides with $\nu$ almost everywhere; since we use again~(\ref{newcinque}) in place of~(\ref{cinque}), and since we apply also~(\ref{smalldev}), there is an extra term $-\delta$, and thus the new version of~(\ref{nove+}) has the term $1-4\delta$ in place of $1-3\delta$. The extension of Lemma~\ref{lemma1} is then proved.

The extension of the construction leading to Lemma~\ref{34} is very simple, one only needs to consider the quantities $r_j^{N-1} g(x_j,\nu(x_j))$ in place of $r_j$. More precisely, this time in the definition~(\ref{defr0}) of $r^0(x)$ one must replace the estimates~(\ref{uno})--(\ref{quattro}) with their extensions; then, this time the point $x_1$ is chosen so to almost maximize the quantity $r^0(x)^{N-1} g(x,\nu(x))$; then, in the definition~(\ref{defr1}) of $r^1(x)$ one should again use the extensions of~(\ref{uno})--(\ref{quattro}), and again $x_2$ is chosen almost maximizing $r^1(x)^{N-1} g(x,\nu(x))$, and so on. With the obvious modifications, the proof of Lemma~\ref{lm2} still works, and the new version of~(\ref{lemma2}) is
\[
\sum r_j^{N-1} g(x_j,\nu(x_j)) \leq \frac 2{\omega_{N-1}}\, \H^{N-1}_g(\Gamma^+_E)\,.
\]
Thanks to this estimate, the modification of the proof of Lemma~\ref{34} is obvious, and the claim is basically the same; the only difference is that $\H^{N-1}$ must be replaced by $\H^{N-1}_g$ in (the new versions of) the estimates~(\ref{volinf}), (\ref{popepi}) and~(\ref{qual}), and that the terms $1+5N$ and $N+3$ in~(\ref{popepi}) and~(\ref{qual}) are now replaced by $2+5N$ and $N+4$.

Finally, it is now straightforward how to modify the proofs of Proposition~\ref{finally} and~\ref{finally2} and of Theorem~\mref{nobound}. Since the present theorem is exactly the extension of Theorem~\mref{nobound}, the proof is concluded.
\end{proof}

Having this result at hand, we can now present the proof of our last main result. We will basically follow the lines of the proof of Theorem~\mref{approx1}, using~Theorem~\ref{genA} in place of Theorem~\mref{nobound}.

\proofof{Theorem~\mref{approx2}}
Since the claim of Theorem~\mref{approx2} is similar to that of Theorem~\mref{approx1}, except that volumes and perimeters are now with respect to two densities, we will follow the lines of the proof of Theorem~\mref{approx1}. However, the very first thing that we did in Section~\ref{subsectthb} to obtain that proof does not work. Indeed, we started defining the set $G$ as a suitable superlevel of the function $\varphi_\delta = (\Chi{E} \Chi{B_{1/\delta}})\ast \rho_\delta$, getting $|G \triangle E| < \eps$ and $|P(G)-P(E)| < \eps$. The fact that this works is quite standard in the Euclidean case (that is, with $f\equiv 1$ and $g\equiv 1$), but it is easily seen to be false with general $f$ and $g$. In fact, while in the Euclidean case the information that $P(E)<+\infty$ implies that $E$ (or its complement) has finite volume, in the general case the information that $P_g(E)<+\infty$ does not imply in general that $E$ has finite volume, nor $f$-volume. Nevertheless, as already anticipated in Remark~\ref{laterinterest}, ``cutting the set'' multiplying by $\Chi{B_{1/\delta}}$ is done just to obtain a bounded set, but it is not needed otherwise. Therefore, if we define this time $\varphi_\delta = \Chi{E} \ast \rho_\delta$, then as in the previous section we can define $G$ as a suitable superlevel of $\varphi_\delta$ so that
\begin{align*}
\big|G \triangle E\big|_f < \eps\,, && \big|P_g(G)-P_g(E)\big| < \eps\,.
\end{align*}
The set $G$ is now smooth but not necessarily bounded, and then this time passing to a polyhedral set $G$ is not possible. However, the very same construction as in Section~\ref{subsectthb} still works, and it allows to obtain the first part of the claim of the Theorem, that is, the existence of a smooth, possibly unbounded set $F$ satisfying~(\ref{eq:weighted1}), (\ref{eq:weighted2}) and~(\ref{eq:weighted3}).

To conclude the proof, we have then to obtain the existence of such a set $F$ which is smooth and bounded (and then passing to a polyhedral set is immediate). This is of course possible if $E$ is bounded, because so are $G$ and $F$ by construction. Suppose, instead, that we only know that $E$ has finite $f$-volume and there is some constant $M$ such that the inequality $g(x,\nu)\leq Mf(x)$ is true for every $x\in\R^N$ and $ \nu\in\S^{N-1}$. Since we already know how to handle the case of a bounded set, the thesis follows by the above construction if we find some set $H$ such that
\begin{align}\label{endC}
\big| E \triangle H \big|_f < \eps\,, && \H^{N-1}_g\big(\partial^* E \triangle \partial^* H\big) < \eps\,.
\end{align}
This will be achieved simply setting $H = E \cap B_R$ for a suitable large radius $R$. Indeed, the assumption that $E$ has finite $f$-volume, together with the fact that it has finite perimeter, ensures the existence of a suitable, large $R_0$ such that
\begin{align}\label{almendC}
\big| E \setminus B_{R_0}\big|_f < \eps\,, && \H^{N-1}_g\big( \partial^* E \setminus B_{R_0}\big) < \frac\eps 2\,.
\end{align}
Then, the first estimate in~(\ref{endC}) is true for every $R\geq R_0$; moreover, since
\[
+\infty > | E|_f  = \int_{R=0}^{+\infty} \bigg(\int_{E\cap \partial B_R} f(x)\, d\H^{N-1}(x)\bigg)\, dR\,,
\]
we deduce the existence of some $R\geq R_0$ such that
\[
\int_{E\cap \partial B_R} f(x)\, d\H^{N-1}(x) \leq \frac \eps{2M}\,.
\]
Then, the bound of $g$ in terms of $f$ gives
\[
\int_{E\cap\partial B_R} g\bigg(x, \frac x{|x|}\bigg)\, d\H^{N-1}(x)\leq 
M \int_{E\cap\partial B_R} f(x)\, d\H^{N-1}(x)\leq \frac \eps 2\,.
\]
Adding this estimate with the second one in~(\ref{almendC}) we obtain for this value of $R$ the validity of~(\ref{endC}). The proof is then concluded.
\end{proof}

\section{Sharpness of the assumptions\label{sec:counterex}}

This last section is devoted to discussing the sharpness of the assumptions of our main theorems. Theorem~\mref{nobound} has no assumption at all on $\Omega$ and $E$, so there is nothing to discuss. Concerning Theorem~\mref{approx1}, we only ask $\Omega$ to be a set of finite perimeter such that $\H^{N-1}(\partial\Omega\setminus\partial^* \Omega)=0$. This is clearly an extremely weak assumption, in particular compared with the Lipschitz assumption of the previous results of this kind, so we do not try to sharpen it. Thus, the only interesting claim to consider is that of Theorem~\mref{approx2}.

Let us first consider the assumptions on $g$; the convexity in the second variable is standard, and it is necessary to give a meaning to the perimeter with density. Instead, the continuity in the first variable is not a very weak assumption, and, at first sight, one might think that it is not sharp. Instead, it is simple to realize that this assumption is needed, as the following example shows.

\begin{example}
Let us consider the case when $\Omega$ is the unit ball, $f\equiv 1$, and $g(x,\nu)$ equals $1$ whenever $|x|=1$, and $2$ otherwise. Then, $g$ is not continuous in the first variable, and it is easy to observe that Theorem~\mref{approx2} does not hold. Indeed, if $E$ is a set of finite perimeter with $\H^{N-1}(\partial^* E \cap \partial^* \Omega)>0$, then for every set $F$ the validity of~(\ref{eq:weighted3}) prevents the validity of~(\ref{eq:weighted1}) and~(\ref{eq:weighted2}) if $\eps$ is small enough. Alternatively, we can consider $g(x,\nu)=1$ for $|x|\leq 1$ and $2$ otherwise, so that $g$ is not continuous in the first variable even up to possible transformations in a negligible set. In this case, we get a counterexample to Theorem~\ref{genA}.
\end{example}

This example shows that the assumption of the first part of the claim of Theorem~\mref{approx2} are sharp; thus, let us consider the second part of the claim. One might hope that the boundedness of both the $f$-volume and the $g$-perimeter of $E$ is enough to get an approximating bounded set $F$, without the extra assumption that $g \lesssim f$. Instead, the following example shows that also this last assumption is needed.

\begin{example}\label{<<>>}
Let us consider a $2$-dimensional situation, defining the sets $E$ and $\Omega$ as
\[
E=\Omega=\Big\{(x,y)\in\R^2: x\geq 1,\, |y|\leq x^{-2} \Big\}\,.
\]
The density $f$ for the volume is simply the Euclidean density $f\equiv 1$, so that $|E|_f<+\infty$. The density for the perimeter, instead, will be a smooth density depending only on the point and not on the direction; that is, for every $p\in\R^2$ and every $\nu,\, \nu' \in \S^1$ we have $g(p,\nu)=g(p,\nu')$. For simplicity of notation, we will simply write $g(p)$ to denote this value. The function $g$ will be any positive, smooth function such that for every $x\geq 1$ one has
\begin{align}\label{essex2}
g\big(x,x^{-2}\big) = \frac 1{x^2}\,, && g(x,y) = e^x \quad \hbox{whenever } |y|\leq \frac 1{2x^2}\,.
\end{align}
Notice that the first property implies that $P_g(E)<+\infty$. We are going to show that there is no bounded set $F$ satisfying~(\ref{eq:weighted1}) and~(\ref{eq:weighted2}) if $\eps$ is small enough, so in particular the bounded smooth or polyhedral approximation is false. This shows the importance of the assumption $g \lesssim f$ in Theorem~\mref{approx2}.
\begin{figure}[htbp]
\begin{tikzpicture}[>=>>>]
\draw (3,1.8) node[anchor=north east] {$E$};
\filldraw[green!40, draw=black,line width=.8pt]  (7,-0.28) .. controls (6,-.32) and (5,-.4) .. (4,-0.5) .. controls (3,-0.66) and (2,-1) .. (1,-2) .. controls (1,0) and (1,0) .. (1,2) .. controls (2,1) and (3,0.66) .. (4,0.5) .. controls (5,0.4) and (6,0.32) .. (7,0.28);
\filldraw[orange!80, draw=black, line width=.8pt]  (7,-0.14) .. controls (6,-.16) and (5,-.2) .. (4,-0.25) .. controls (3,-0.33) and (2,-0.5) .. (1,-1) .. controls (1,0) and (1,0) .. (1,1) .. controls (2,0.5) and (3,0.33) .. (4,0.25) .. controls (5,0.2) and (6,0.16) .. (7,0.14);
\draw (-1,-0.5) node[anchor=east] {$g\gg1$};
\draw[->] (-1,-.5) -- (1.5,0);
\draw (-1,1.5) node[anchor=east] {$g\ll1$};
\draw[->] (-1,1.5) -- (1.5,1.2);
\label{Figure2}
\end{tikzpicture}
\caption{The situation in Example~\ref{<<>>}; the density $g$ is very low near the boundary of $E$, but very large in the orange region.}
\end{figure}
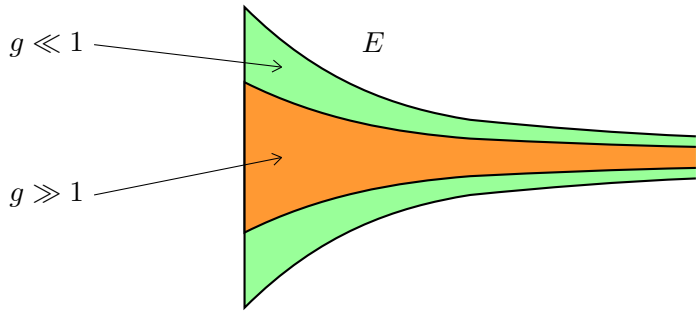
To obtain the thesis, it is enough to notice that, if $F$ is a bounded set satisfying~(\ref{eq:weighted2}) with $\eps$ small enough, then there must be a point in $\partial F$ having coordinates $(t,0)$ for some large $t$, and $t$ is arbitrarily large if $\eps\ll 1$. We claim that any curve joining $(t,0)$ to the boundary of $\big\{ (x,y)\in\R^2: x\geq 1,\, |y|\leq 1/2x^2\big\}$, which is the orange region in Figure~\ref{Figure2}, has very large $g$-length. This immediately implies that $F$ cannot satisfy~(\ref{eq:weighted1}) as well.\par

To show our claim, let $\gamma$ be any curve joining the point $(t,0)$ with some point $(a,b)$ in the boundary of the orange region. Since the (Euclidean) distance of $(t,0)$ from this boundary is larger than $1/2t^2$, then we can call $\gamma^-$ the first part of $\gamma$, up to a (Euclidean) length of $1/2t^2$. Thus, the first coordinate of any point of $\gamma^-$ is larger than $t-1$, thus by the second property in~(\ref{essex2}) we immediately get
\[
\H^1_g(\gamma)\geq \H^1_g(\gamma^-) \geq \frac{e^{t-1}}{2t^2}\,.
\]
Since the last quantity is arbitrarily large as soon as $t$ is large enough, hence as soon as $\eps$ is small enough, the proof is concluded.
\end{example}

\begin{remark}
In the above example, the function $g$ is unbounded both from above and from below; however, what we were looking for was a counterexample with $g$ very large somewhere. As a consequence, one might wish to find a counterexample where $g$ remains bounded from below (away from $0$). It is simple to observe that this is not possible in dimension $2$, because the Euclidean length of any curve is larger than its diameter, and then any connected and unbounded set $E$ has infinite perimeter if $g$ is bounded from below. Instead, in dimension $N\geq 3$ this is easily possible. Actually, an example can be obtained with a very simple modification of Example~\ref{<<>>}: it is enough to consider $\Omega=E$ as the set of points $(x,w)\in \R\times\R^2$ such that $x\geq 1$ and $|w|\leq x^{-1}$, and to modify~(\ref{essex2}) asking $g (x,w)= 1$ whenever $|w|=1/x$, and $g(x,w)=e^x$ whenever $|w|\leq 1/2x$, as well as $g\geq 1$ everywhere. This time $g$ is bounded away from $0$ and the obvious modification of the argument of the above example again shows that $E$ cannot be approximated by bounded sets.
\end{remark}

\end{document}